\documentclass[11pt]{amsart}
\usepackage{geometry}
\geometry{left=2.75cm, right=2.75cm, bottom=1 in , top=1.3in,  includefoot}

\usepackage{amssymb}


\DeclareMathOperator{\ab}{Ab}
\DeclareMathOperator{\uni}{uni}
\DeclareMathOperator{\multi}{multi}
\DeclareMathOperator{\Aut}{Aut}
\DeclareMathOperator{\rk}{rk}

\usepackage{amsrefs}


\newtheorem{theorem}{Theorem}[section]
\newtheorem{lemma}[theorem]{Lemma}
\newtheorem{corollary}[theorem]{Corollary}
\newtheorem{proposition}[theorem]{Proposition}

\theoremstyle{definition}
\newtheorem{definition}[theorem]{Definition}
\newtheorem{example}[theorem]{Example}

\newtheorem*{ack}{Acknowledgements}

\theoremstyle{remark}
\newtheorem{remark}[theorem]{Remark}

\numberwithin{equation}{section}

\begin{document}

\title{Vanishing of Higher Order Alexander-type Invariants of Plane Curves}


\author{Jos\' e I. Cogolludo-Agust\' in}
\address{Departamento de Matem\'aticas, IUMA, Universidad de Zaragoza, 
C. Pedro Cerbuna 12, 50009 Zaragoza, Spain}
\email{jicogo@unizar.es}

\author{Eva Elduque}
\address{Department of Mathematics,
University of Michigan-Ann Arbor,
530 Church Street, 
Ann Arbor, MI 48109-1043, USA}
\email{elduque@umich.edu}
\thanks{The first author is partially supported by
Grupo ``\'Algebra y Geometr{\'\i}a'', Gobierno de Arag\'on/Fondo Social
Europeo and by the Spanish Government MTM2016-76868-C2-2-P. The second author is partially supported by an AMS-Simons Travel Grant}

\subjclass[2020]{32S25, 32S55, 32S05, 32S20, 57K31}

\date{}

\dedicatory{}

\commby{}

\begin{abstract}
The higher order degrees are Alexander-type invariants of complements to an affine plane curve.
In this paper we characterize the vanishing of such invariants for transversal unions of plane curves 
$C'$ and $C''$ in terms of the finiteness, the vanishing properties of the invariants of $C'$ and $C''$, 
and whether they are irreducible or not. As a consequence, we characterize which of these types of 
curves have trivial multivariable Alexander polynomial in terms of their defining equations. 
Our results impose obstructions on the class of groups that can be realized as fundamental groups of 
complements of a transversal union of curves.
\end{abstract}

\maketitle

\section{Introduction}
The study of curve complements goes back to the work of Zariski (\cite{zariski}), who observed that the position of the singularities of a plane curve influenced the topology of the curve, and that the fundamental group of the complement of the curve detected this phenomenon.  Alexander-type invariants, which appeared first in classical knot theory and were first imported to study singularities of plane curves by Libgober (\cites{lib3,libalex}), are easier to handle than the fundamental group, and they are also sensitive to the type and position of singularities.


In knot theory, a strategy to address problems that the Alexander polynomial is not strong enough to solve is to consider non-abelian Alexander-type invariants, such as the \textit{higher order degrees} (e.g., see Cochran, \cite{Cochran}), which have been shown to give better bounds for the knot genus than the Alexander polynomial (Horn, \cite{Horn}). These invariants also have striking applications in the world of $3$-manifolds (Harvey, \cite{Harvey}).

Leidy and Maxim initiated in \cite{MaxLeidy, MaxLeidySurvey} the study of higher order Alexander-type invariants for complex affine plane curve complements, and Maxim and the second author continued this work in \cite{EvaMax}. To any affine plane curve $C \subset \mathbb{C}^2$ (given by the zeros of a reduced non-constant polynomial $f\in\mathbb{C}[x,y]$), in \cite{MaxLeidy} one associates a sequence $\{\delta_n(C)\}_n$ of (possibly infinite) integers, called the higher order degrees of $C$. Roughly speaking, these integers measure the
``sizes'' of quotients of successive terms in the rational derived series $\{G_r ^{(n)}\}_{n \geq 0}$ of the fundamental group $G=\pi_1(\mathbb{C}^2~\setminus~C)$ of the curve complement. 
It was also noted in
\cite{MaxLeidy} that the higher order degrees of plane curves (at any level
$n$) are sensitive to the position of singular points.
These integers can also be interpreted as L2-Betti numbers associated to the tower of coverings of $\mathbb{C}^2 \setminus C$ corresponding to the subgroups $G_r ^{(i)}$ (the first of which is the universal abelian cover), so in principle there is no reason to expect that such invariants have any good vanishing or finiteness properties. 
Some finiteness results obtained in \cites{MaxLeidy,EvaMax} are summarized in this theorem.

\begin{theorem}\label{thmfinite}
Let $C\subset \mathbb{C}^2$ be a plane curve of degree $m$. If one of the following conditions hold, then $\delta_n(C)$ is finite:
\begin{enumerate}
\item $C$ is irreducible \cite[Remark 3.3]{MaxLeidy}.
\item \label{trans}$C$ is in general position at infinity \cite[Corollary 4.8]{MaxLeidy}.
\item \label{ess}$C$ is an essential line arrangement \cite[Theorem 2]{EvaMax}.
\item \label{control}$C$ has only nodes or simple tangents at infinity  \cite[Theorem 4]{EvaMax}.
\end{enumerate}

Moreover, in the cases~\eqref{trans}, \eqref{ess}, and~\eqref{control}, we have that
$
\delta_n(C)\leq m(m-2)
$
for all $n\geq 0$. That is, there is a uniform bound for all the higher order degrees that depends only 
on the degree of a curve.
\end{theorem}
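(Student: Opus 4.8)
The plan is to reduce every case to the vanishing of a single homological rank and then exploit the geometry in each setting. Writing $U=\mathbb{C}^2\setminus C$, $G=\pi_1(U)$ and $\Gamma_n=G/G_r^{(n+1)}$, the group $\Gamma_n$ is poly-torsion-free-abelian, so $\mathbb{Z}\Gamma_n$ is an Ore domain with skew field of fractions $\mathcal{K}_n$. The higher order degree $\delta_n(C)$ is the degree of the order of the torsion of the $n$-th Alexander module, and it is finite precisely when that module is torsion, equivalently when the rank
\[
b_1^{(n)}(U):=\dim_{\mathcal{K}_n}H_1(U;\mathcal{K}_n)
\]
vanishes. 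Thus the theorem splits into (a) proving $b_1^{(n)}(U)=0$ for all $n$, and (b) bounding the torsion degree by $m(m-2)$. For (a) the main input I would use is the monotonicity of these ranks (Cochran--Harvey): $b_1^{(n)}(U)\le b_1^{(n-1)}(U)\le\cdots\le b_1^{(0)}(U)=\beta_1(U)-1$, where $\beta_1(U)$ is the ordinary first Betti number.

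First I would dispatch the irreducible case. If $C$ is irreducible then $H_1(U)=\mathbb{Z}$, so $\beta_1(U)=1$ and $b_1^{(0)}(U)=0$; monotonicity then forces $b_1^{(n)}(U)=0$ for every $n$, giving finiteness at once. This is the only case in which finiteness is formal, because here the abelianization already leaves no room for higher ranks.

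For the remaining cases $C$ is typically reducible, say with $s$ components, so $b_1^{(0)}(U)=s-1>0$ and monotonicity alone is not enough: one must genuinely use the hypotheses at infinity to force the ranks down to $0$. Here I would exploit the geometry via a generic pencil of lines, which exhibits $U$ up to homotopy as a finite $2$-complex carrying a controlled presentation of $G$ (meridian generators of the generic fiber $F=\mathbb{C}\setminus\{m\ \text{points}\}$ together with braid-monodromy relations); for an essential line arrangement I would instead use the combinatorics of the intersection lattice. The transversality hypotheses (general position, or only nodes / simple tangents on the line at infinity) are exactly what make the link at infinity as simple as possible, and I would feed this into a decomposition of $U$, e.g.\ a Mayer--Vietoris relative to a regular neighborhood of the part at infinity, to prove $H_1(U;\mathcal{K}_n)=0$. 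For the uniform bound I would split a $\mathbb{Z}$ off $\Gamma_n$ via the total linking number, writing $\mathbb{Z}\Gamma_n=\mathbb{Z}\widetilde{\Gamma}_n[t^{\pm1}]$ with $\widetilde{\Gamma}_n=\ker(\Gamma_n\to\mathbb{Z})$, and estimate the degree in $t$ of the order of the (now torsion) module $H_1(U;\mathcal{R}_n)$ over the PID $\mathcal{R}_n=\mathbb{K}_n[t^{\pm1}]$, with $\mathbb{K}_n$ the skew field of $\mathbb{Z}\widetilde{\Gamma}_n$: the entries of the relevant boundary map are Fox derivatives of the monodromy relations, whose $t$-degrees are uniformly controlled, and the resulting count matches $-\chi(C)=m(m-2)-\mu$ with $\mu\ge0$ measuring the singularities, so it never exceeds $m(m-2)$, the value realized by a smooth curve in general position.

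The hardest point, and the one I expect to consume most of the work, is proving $b_1^{(n)}(U)=0$ for reducible $C$ uniformly in $n$: monotonicity gives only $\le s-1$, and closing the gap to $0$ requires the essential or transversality hypotheses to interact correctly with the rational derived series at every level. Equally delicate is making the degree bound \emph{independent of $n$}: one must show that passing to the deeper quotients $\Gamma_n$ cannot increase the $t$-degrees appearing in the order of the higher order Alexander module, so that the single numerical quantity $m(m-2)$ dominates all levels at once.
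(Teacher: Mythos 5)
First, be aware that the paper does not actually prove Theorem~\ref{thmfinite}: it is a summary of results imported from \cite{MaxLeidy} and \cite{EvaMax}, with each item carrying its own citation, so your proposal is really being measured against those references rather than against an argument in this paper. Your opening reduction is correct and standard: $\delta_n(C)<\infty$ if and only if $H_1(U;\mathcal{K}_n)=0$, and case (1) does follow from the bound $\rk_{\mathcal{K}_n}H_1(U;\mathcal{K}_n)\leq \beta_1(U)-1$ for nontrivial PTFA coefficient systems (essentially \cite[Proposition 5.6]{Harvey}, reflected here in Lemma~\ref{lem:Fox}). Two cautions even there: $b_1^{(0)}(U)=\beta_1(U)-1$ is only an inequality, not an equality (for two transversal lines $\beta_1(U)=2$ while $H_1(U;\mathcal{K}_0)=0$), and the intermediate monotonicity $b_1^{(n)}\leq b_1^{(n-1)}$ between consecutive levels of the tower is not something you can simply quote; what the literature provides is the comparison of each level with the untwisted Betti numbers. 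Since case (1) needs only that endpoint comparison, that case survives.

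The genuine gaps are in cases (2)--(4) and in the uniform bound. For reducible $C$ the endpoint bound only gives $\rk_{\mathcal{K}_n} H_1(U;\mathcal{K}_n)\leq s-1>0$, and everything you say about closing that gap --- ``feed this into a Mayer--Vietoris relative to a regular neighborhood of the part at infinity,'' ``the $t$-degrees are uniformly controlled'' --- is a description of what must be done, not an argument; you flag it yourself as the hardest point and leave it unresolved. This is precisely the content of \cite[Corollary 4.8]{MaxLeidy} (a divisibility of the localized Alexander module by that of the link at infinity when $C$ is in general position) and of \cite[Theorems 2 and 4]{EvaMax} (exhibiting nonvanishing $(m-1)\times(m-1)$ minors of the Fox matrix of a braid monodromy presentation), and none of that is reproduced. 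Moreover, your proposed mechanism for the uniform bound --- showing that passing to deeper quotients $\Gamma_n$ cannot increase the $t$-degrees --- is the wrong mechanism: no such monotonicity in $n$ holds. For knots, Cochran's inequalities $\delta_0\leq\delta_1+1\leq\delta_2+1\leq\cdots$ force the degrees to be non-decreasing from level one onward and they can strictly grow, while Example~\ref{exmcubiclines} of this paper shows that for curves they can also drop by arbitrarily large amounts; the bound $m(m-2)$ in the references is obtained by estimating $\delta_n$ at each level $n$ separately against the same geometric quantity coming from the behaviour at infinity, not by comparing different levels with one another.
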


In relation to an old question of Serre (\cites{arapura,Serre}), finiteness results impose restrictions 
on which groups can be realized as fundamental groups of curve complements, but vanishing results, on top 
of being stronger, shed more light on what type of problems these invariants are well suited for. For example, 
if we know that $\delta_n=0$ for a class of curves, then $\delta_n$ will not distinguish curves within that 
class, but it can potentially distinguish a curve in that class from a curve in a different class.

An example  of a ``vanishing'' (or ``triviality'') result is the following theorem of Oka. In general terms, 
it tells us that the univariable Alexander polynomial (see Definition~\ref{defnuni}) of a transversal union 
of curves $C=C'\cup C''$ does not remember information about the topology of $C'$ or $C''$, even though the 
fundamental group does (See Theorem~\ref{thmos}).

\begin{theorem}[\cite{Oka}, Theorem 34]
Let $C$ be a plane curve of the form $C=C'\cup C''$, where $C'$ and $C''$ are curves in $\mathbb{C}^2$ of degrees $m'$ and $m''$ respectively. Assume that $C$ is in general position at infinity, and assume that $C'\cap C''$ consists on $m'm''$ distinct points. Then,
$$
\Delta^{\uni}_C(t)=(t-1)^{s-1}
$$
where $s$ is the number of irreducible components of $C$ and $\Delta^{\uni}_C(t)$ is the (univariable) 
Alexander polynomial of $C$.
\end{theorem}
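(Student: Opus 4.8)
The plan is to reduce everything to the product structure of the fundamental group. Write $U=\mathbb{C}^2\setminus C$, $U'=\mathbb{C}^2\setminus C'$, $U''=\mathbb{C}^2\setminus C''$, and set $G=\pi_1(U)$, $G'=\pi_1(U')$, $G''=\pi_1(U'')$. Since $C'\cap C''$ consists of exactly $m'm''$ points, B\'ezout's theorem forces each of them to be a transverse intersection of two smooth local branches (a node of $C$), so the product theorem (Theorem~\ref{thmos}) applies and $G\cong G'\times G''$. The first thing to pin down is that, under this isomorphism, the total linking number homomorphism $\phi\colon G\to\mathbb{Z}$ (sending every meridian of $C$ to $1$) decomposes as $\phi=\phi'\oplus\phi''$, where $\phi'$ and $\phi''$ are the total linking homomorphisms of $C'$ and $C''$: this holds because the meridians of the components of $C'$ generate $G'$ and are each sent to $1$, and likewise for $C''$.

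Next I would compute the univariable Alexander module $A:=H_1(U;\Lambda_\phi)$, where $\Lambda=\mathbb{Q}[t^{\pm1}]$ and $\Lambda_\phi$ is the rank-one local system on which a loop $\gamma$ acts by $t^{\phi(\gamma)}$. Since first homology with a local system depends only on $\pi_1$ and the system, $A\cong H_1(G;\Lambda_\phi)$; and because $BG=BG'\times BG''$ while $\Lambda_\phi$ is the external tensor product $\Lambda_{\phi'}\boxtimes_\Lambda\Lambda_{\phi''}$ (this is exactly $\phi=\phi'\oplus\phi''$), the K\"unneth formula over the PID $\Lambda$ gives
\begin{equation*}
A\;\cong\;\bigl(A'\otimes_\Lambda H_0(G'';\Lambda_{\phi''})\bigr)\oplus\bigl(H_0(G';\Lambda_{\phi'})\otimes_\Lambda A''\bigr)\oplus\operatorname{Tor}_1^\Lambda\!\bigl(H_0(G';\Lambda_{\phi'}),H_0(G'';\Lambda_{\phi''})\bigr),
\end{equation*}
where $A'=H_1(G';\Lambda_{\phi'})\cong H_1(U';\Lambda_{\phi'})$ and $A''$ are the univariable Alexander modules of $C'$ and $C''$. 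As $\phi'$ and $\phi''$ are surjective, $H_0(G';\Lambda_{\phi'})=H_0(G'';\Lambda_{\phi''})=\Lambda/(t-1)\cong\mathbb{Q}$, so the three summands are $A'/(t-1)A'$, $A''/(t-1)A''$, and $\operatorname{Tor}_1^\Lambda(\mathbb{Q},\mathbb{Q})\cong\mathbb{Q}$. In particular $A$ is killed by a power of $(t-1)$, hence its order is a power of $(t-1)$, and it only remains to count $\mathbb{Q}$-dimensions.

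To finish, the Wang (Milnor) exact sequence of the infinite cyclic cover of $U'$ yields a short exact sequence $0\to A'/(t-1)A'\to H_1(U';\mathbb{Q})\to\mathbb{Q}\to0$ (using that $t$ acts trivially on the $H_0$ of the connected cover); since $H_1(U';\mathbb{Q})\cong\mathbb{Q}^{s'}$ for an affine complement with $s'$ irreducible components, we get $\dim_\mathbb{Q}A'/(t-1)A'=s'-1$, and likewise $s''-1$ for $C''$. Thus $A$ is a $\mathbb{Q}$-vector space of dimension $(s'-1)+(s''-1)+1=s-1$ (as $s=s'+s''$, the two curves sharing no common component), supported at $t=1$, so $\Delta^{\uni}_C(t)=(t-1)^{s-1}$. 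Notably this argument needs no finiteness of the $\delta_n$'s, since only $A'/(t-1)A'$ and $A''/(t-1)A''$, which are automatically finite dimensional, ever enter.

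I expect the main obstacle to be conceptual rather than computational: even though $G\cong G'\times G''$, the surface $U$ is four-real-dimensional and is \emph{not} homotopy equivalent to $U'\times U''$, so one cannot simply realize the infinite cyclic cover of $U$ as a product of the covers of $U'$ and $U''$. The device that rescues the computation is to run K\"unneth at the level of $BG=BG'\times BG''$, which is legitimate because of the degree-one coincidence $H_1(U;\Lambda_\phi)\cong H_1(G;\Lambda_\phi)$; verifying the compatibility $\phi=\phi'\oplus\phi''$ is the other place where the transversality hypothesis, through Theorem~\ref{thmos} and the fact that meridians generate, is genuinely used.
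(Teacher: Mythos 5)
Your argument is correct, but note that the paper does not actually prove this statement: it is quoted verbatim from Oka's survey (\cite{Oka}, Theorem 34) and used as motivation, so there is no internal proof to compare against. What you have written is a legitimate self-contained proof, and it is in the spirit of the paper's Section 3 in that everything is derived from the Oka--Sakamoto isomorphism $G\cong G'\times G''$; the difference is that where the paper (in Lemmas~\ref{thmTrivialAP} and~\ref{thmTrans}) manipulates Fox-calculus presentation matrices, you run a K\"unneth computation for $H_1\bigl(BG'\times BG'';\mathcal{L}^{\psi'}\boxtimes_{\Lambda}\mathcal{L}^{\psi''}\bigr)$ over the PID $\Lambda=\mathbb{Q}[t^{\pm1}]$. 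The two delicate points are exactly the ones you flag: (i) $U$ is not homotopy equivalent to $U'\times U''$, so you must justify replacing $U$ by $BG=BG'\times BG''$, which works because $H_1$ with local coefficients depends only on $(\pi_1,\text{module})$ (Cartan--Leray in degree one); and (ii) the total linking homomorphism splits as $\psi'\oplus\psi''$, which holds because meridians generate each factor and all map to $1$. The bookkeeping is right: since $\psi'$, $\psi''$ are onto, $H_0=\Lambda/(t-1)$, the three K\"unneth summands contribute $s'-1$, $s''-1$ and $1$ (the last from $\operatorname{Tor}_1^{\Lambda}(\mathbb{Q},\mathbb{Q})$), the Wang sequence gives the first two counts, and the whole module is annihilated by $t-1$, so $F_0=\bigl((t-1)^{s-1}\bigr)$. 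One bonus of your route worth making explicit: you never use the hypothesis that $C$ is in general position at infinity, so for the affine univariable Alexander polynomial as defined in Definition~\ref{defnuni} the conclusion already follows from the transversality of $C'\cap C''$ alone --- consistent with the fact that the paper's Theorem~\ref{thmCharac} and Corollary~\ref{corAP} also dispense with that hypothesis (it appears in Oka's statement because of his setup, not because the computation needs it).
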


It is natural to ask whether more involved Alexander invariants also exhibit this behavior. In this paper, we completely characterize the vanishing of the higher order degrees of a union $C$ of two curves $C'$ and $C''$ that intersect transversally (even when $C$ is not in general position at infinity). This characterization is done in terms of the finiteness and vanishing properties of the higher order degrees of $C'$ and $C''$, obtaining vanishing results in most cases (and finiteness in all cases). More concretely, we obtain the following.

\begin{theorem}\label{thmCharac}
Let $C=C'\cup C''\subset \mathbb{C}^2$ be the union of two affine plane curves, with $\deg C'=m'$ and $\deg C''=m''$. Suppose that $C'\cap C''$ consists on $m'm''$ distinct points in $\mathbb{C}^2$. Then,
\begin{enumerate}
\item $\delta_n(C)$ is finite for all $n\geq 0$.
\item If $C'$ and $C''$ are both irreducible or both not irreducible, then $\delta_n(C)=0$ for all $n\geq 0$.
\item If $C'$ is irreducible and $C''$ is not irreducible, and
\begin{enumerate}
\item $\delta_0(C')\neq 0$, then $$ \delta_0(C)=0\Leftrightarrow \delta_0(C'')<\infty, 
\text{ \ \ and \ \ \ }\delta_n(C)=0 \text{ \ \ for all \ \ }n\geq 1.$$
\item $\delta_0(C')= 0$, then, for all $n\geq 0$, $$\delta_n(C)=0\Leftrightarrow \delta_n(C'')<\infty.$$
\end{enumerate}
\end{enumerate}
\end{theorem}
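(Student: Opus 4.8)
The plan is to reduce the entire statement to a computation in the homology of the group $G=\pi_1(\mathbb{C}^2\setminus C)$ with coefficients in skew fields, and then to exploit the product structure coming from transversality. The hypothesis that $C'\cap C''$ consists of $m'm''$ points is exactly what triggers the Oka--Sakamoto splitting (Theorem~\ref{thmos}), so $G\cong G'\times G''$ with $G'=\pi_1(\mathbb{C}^2\setminus C')$ and $G''=\pi_1(\mathbb{C}^2\setminus C'')$. Because curve complements have torsion-free first homology, $G_r^{(1)}=[G,G]$, and an easy induction shows that the rational derived series of a direct product is the product of the rational derived series; hence $\Gamma_n:=G/G_r^{(n+1)}$ decomposes as $\Gamma_n'\times\Gamma_n''$. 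Finally, since the classifying map $X\to K(G,1)$ is an isomorphism on $H_1$ with any local coefficients, each higher-order degree $\delta_n(C)$ --- the $\mathbb{K}_n$-dimension of the $n$-th order Alexander module $H_1(X;\mathbb{K}_n[t^{\pm1}])$, where $\mathbb{K}_n$ is the Ore skew field of $\mathbb{Q}\Phi_n$, $\Phi_n=\ker(\Gamma_n\xrightarrow{\mathrm{lk}}\mathbb{Z})$ is the total linking-number map, and $t$ is a meridian --- equals the group-homology dimension $\dim_{\mathbb{K}_n}H_1(G;\mathbb{K}_n[t^{\pm1}])$. Everything then becomes a purely group-theoretic computation about $G'\times G''$.

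For part (1) I would separate the free and torsion parts of $H_1(G;\mathbb{K}_n[t^{\pm1}])$ over the skew PID $\mathbb{K}_n[t^{\pm1}]$: since $\delta_n$ records both, finiteness is equivalent to the vanishing of the free part, i.e. to $H_1(G;\mathbb{L}_n)=0$, where $\mathbb{L}_n$ is the full Ore field of $\mathbb{Q}\Gamma_n$. I would compute this by localizing one factor at a time. Inverting $\mathbb{Q}\Gamma_n'\setminus\{0\}$ turns $\mathbb{Q}\Gamma_n$ into the external tensor product $\mathbb{L}_n'\boxtimes\mathbb{Q}\Gamma_n''$, so the Künneth theorem over $\mathbb{Q}$ expresses $H_1(G;\mathbb{L}_n'[\Gamma_n''])$ as a sum of terms $H_p(G';\mathbb{L}_n')\otimes_\mathbb{Q} H_q(G'';\mathbb{Q}\Gamma_n'')$. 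The coinvariants of a skew field under a nontrivial group action vanish, so $H_0(G';\mathbb{L}_n')=0$; the surviving term $H_1(G';\mathbb{L}_n')\otimes H_0(G'';\mathbb{Q}\Gamma_n'')$ carries the trivial $\Gamma_n''$-action, and is therefore annihilated upon localizing the second factor to $\mathbb{L}_n$. Hence $H_1(G;\mathbb{L}_n)=0$ and $\delta_n(C)<\infty$ for all $n$.

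For parts (2) and (3) the relevant object is $H_1(G;\mathbb{K}_n[t^{\pm1}])$ itself, which I would analyze through the Lyndon--Hochschild--Serre spectral sequence of $1\to G''\to G\to G'\to 1$, with $E^2_{p,q}=H_p(G';H_q(G'';\mathbb{K}_n[t^{\pm1}]))$. Two vanishing mechanisms drive the result. First, if $\Phi_n''\neq 1$ then some degree-zero element of $\Gamma_n''$ acts on $\mathbb{K}_n[t^{\pm1}]$ as a unit $\neq 1$, so $H_0(G'';\mathbb{K}_n[t^{\pm1}])=0$ and the sequence collapses to $H_1(G;\mathbb{K}_n[t^{\pm1}])\cong H_0(G';M_1)$ with $M_1:=H_1(G'';\mathbb{K}_n[t^{\pm1}])$. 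Second, if $\Phi_n'\neq 1$ the same unit argument makes the $G'$-coinvariants of $M_1$ vanish. The crucial bookkeeping is the equivalence, valid for irreducible $C'$, that $\Phi_n'=1$ for all $n$ $\iff$ $\Gamma_n'=\mathbb{Z}$ for all $n$ $\iff$ $\delta_0(C')=0$; this holds because the rational Alexander module of $C'$ is $((G')^{(1)}_r/(G')^{(2)}_r)\otimes\mathbb{Q}$, and a torsion-free group that is rationally trivial must vanish, forcing the rational derived series to stabilize. With this in hand the cases fall out: when both curves are reducible, $\Phi_n',\Phi_n''\neq 1$ and both mechanisms fire, giving $\delta_n(C)=0$; when both are irreducible one reduces either to the abelian model $\Gamma_n=\mathbb{Z}^2$ or to the finiteness of $\delta_n$ of an irreducible curve (Theorem~\ref{thmfinite}), again giving $0$; and in the mixed case with $C'$ irreducible, $\delta_0(C')\neq 0$ makes $\Phi_n'\neq 1$ for $n\geq 1$ and kills $\delta_n(C)$ there, whereas $\delta_0(C')=0$ forces $\Gamma_n'=\mathbb{Z}$, so the $C'$-factor contributes only the variable $t$ and $\delta_n(C)$ reduces to a free rank of $C''$'s own module.

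The main obstacle I anticipate is precisely this last reduction: identifying $\delta_n(C)$ with the free $\mathbb{K}_n''[(t'')^{\pm1}]$-rank of the \emph{intrinsic} higher-order module $H_1(G'';\mathbb{K}_n''[(t'')^{\pm1}])$ of $C''$, and hence with the finiteness of $\delta_n(C'')$. This requires a noncommutative base-change argument showing that $H_1(G'';\mathbb{K}_n[t^{\pm1}])$ is obtained from $H_1(G'';\mathbb{K}_n''[(t'')^{\pm1}])$ by a flat extension of skew fields (adjoining the extra central-like variable that records the $C'$-meridian), followed by a careful computation of the residual coinvariant $M_1/(t-1)M_1$: the torsion summands get evaluated at a transcendental and die, while the free summands survive as a finite-dimensional $\mathbb{K}_n$-space of dimension exactly that free rank. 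Making this transition rigorous at every level $n\geq 1$, where $\mathbb{K}_n$ and $\mathbb{K}_n''$ are genuinely noncommutative and one must track the $t$-action through the successive Ore localizations, is the technical heart of the argument; the model computation at $n=0$, where all rings are commutative, is what I would set up first in order to fix the pattern.
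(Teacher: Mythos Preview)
Your approach is correct in outline and genuinely different from the paper's. Both routes start from the Oka--Sakamoto splitting $G\cong G'\times G''$, but the paper proceeds by explicit Fox calculus: it chooses a concrete presentation of $G$ adapted to the product (Remark~\ref{rempresentation}), writes down the Jacobian matrix $B(n)$, and performs a long sequence of row and column operations over $R_n$ to reduce $B(n)$ to a form from which $\delta_n(C)$ can be read off directly (Lemmas~\ref{prop0}, \ref{thmTrivialAP}, \ref{thmTrans}). In particular, the paper obtains the exact formula $\delta_n(C)=m''-k-1$ in the mixed case, where $k$ is the $\mathcal{K}_n''$-row rank of the Fox matrix $A''$ for $C''$; your free-rank interpretation is the same number, since $\dim_{\mathcal{K}_n''}H_1(G'';\mathcal{K}_n'')=m''-k-1$.

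Your route via K\"unneth and the Lyndon--Hochschild--Serre spectral sequence is more conceptual and explains the vanishing mechanism cleanly: a nontrivial element of $\Phi_n''$ (resp.\ $\Phi_n'$) gives a unit $u\neq 1$ in $\mathbb{K}_n$, and since left multiplication by $u-1$ commutes with the $G''$-action and is invertible on $R_n$ as a right $R_n$-module, it forces $H_0(G'';R_n)=0$ (resp.\ $H_0(G';M_1)=0$). This argument is sound; note only that the second step works because $(g'-1)$ acts as an \emph{automorphism} of $M_1$, not because $M_1$ is a left $R_n$-module. The one place where your sketch is genuinely thin is the sub-case of two irreducible curves at $n=0$ with both $\delta_0=0$: there neither $\Phi_0'$ nor $\Phi_0''$ is nontrivial and $H_0(G'';R_0)\neq 0$, so the LHS does not collapse; the paper handles this (and you can too) by observing that $G_r^{(1)}=G_r^{(2)}$ directly from the product, so $\delta_n(C)=0$ trivially. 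What your approach buys is a clearer structural reason for the dichotomy and a proof that should generalize to other product situations; what the paper's approach buys is an explicit numerical formula and a computation that stays entirely within the Fox-calculus framework of Section~\ref{remFox}.
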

This provides a broad generalization of the vanishing results of \cite{EvaMax}, where the fundamental 
group of one of the curve complements was assumed to be isomorphic to $\mathbb{Z}$, and $\delta_n$ of 
the other curve was assumed to be finite.

The paper is structured as follows. In Section 2 we recall the relevant definitions of the Alexander-type invariants that are used throughout the paper and the relationships between them. In Section 3 we prove the main result (Theorem~\ref{thmCharac}). In Section 4, we characterize which curves have $\delta_0(C)=\infty$ in terms of their defining equations (curves of affine pencil type, as defined in Lemma~\ref{lempencil}) and arrive at Corollary~\ref{corAP} below about the triviality of the multivariable Alexander polynomial of a transversal union of curves (see Definition~\ref{defnmulti}). This corollary provides concrete restrictions as to which groups can be realized as fundamental groups of a complement of a transversal union of curves (see Remark~\ref{remcomputations}).
\begin{corollary}\label{corAP}
Under the same hypotheses as in Theorem~\ref{thmCharac},
\begin{enumerate}
\item $\Delta_C^{\multi}\neq 0$.
\item If $C'$ and $C''$ are both irreducible or both not irreducible, then $\Delta_C^{\multi}$ is a non-zero constant.
\item If $C'$ is irreducible and $C''$ is not irreducible, then $\Delta_C^{\multi}$ is a non-zero constant if and only if $C''$ is not of affine pencil type.
\end{enumerate}
Moreover, if $\Delta_C^{\multi}(t,t_1,\ldots,t_s)$ is not a non-zero constant, then it is of the form $(t-1)^k$ for some $1\leq k\leq m''-1$, where $t$ is the variable corresponding to a positively oriented meridian around $C'$, and $t_i$ is the variable corresponding to a positively oriented meridian around the $i$-th irreducible component of $C''$.
\end{corollary}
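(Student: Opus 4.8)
The plan is to translate the corollary into statements about the higher order degree $\delta_0$, using the dictionary between $\delta_0(C)$ and the multivariable Alexander module recorded in Section 2, and then to feed in Theorem~\ref{thmCharac} together with the characterization of $\delta_0=\infty$ from Lemma~\ref{lempencil}. Concretely, I would first isolate the two equivalences
$$\Delta_C^{\multi}\neq 0\iff \delta_0(C)<\infty,\qquad \Delta_C^{\multi}\ \text{is a non-zero constant}\iff \delta_0(C)=0,$$
which hold because $\delta_0(C)$ measures the size of the multivariable Alexander module $\mathcal{A}_0(C)$: it is infinite exactly when $\mathcal{A}_0(C)$ fails to be torsion over the group ring of $H_1$, i.e. exactly when its order $\Delta_C^{\multi}$ is zero, and it vanishes exactly when that order has degree zero, i.e. is a unit. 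With this dictionary, part (1) is immediate from Theorem~\ref{thmCharac}(1), and part (2) is immediate from Theorem~\ref{thmCharac}(2), since in that case $\delta_0(C)=0$.

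For part (3), I would observe that in both subcases of Theorem~\ref{thmCharac}(3) one has $\delta_0(C)=0\iff \delta_0(C'')<\infty$. Combining this with Lemma~\ref{lempencil}, which identifies the curves with $\delta_0=\infty$ as precisely those of affine pencil type, gives $\delta_0(C)=0\iff C''$ is not of affine pencil type, and the dictionary above then yields that $\Delta_C^{\multi}$ is a non-zero constant if and only if $C''$ is not of affine pencil type, as claimed.

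The remaining, and main, task is the \emph{Moreover} statement. When $\Delta_C^{\multi}$ is not a non-zero constant we are necessarily in case (3) with $C''$ of affine pencil type, so that $\delta_0(C)$ is finite and strictly positive; what must be pinned down is the order ideal of $\mathcal{A}_0(C)$. For this I would return to the presentation of $\mathcal{A}_0(C)$ produced in the proof of Theorem~\ref{thmCharac} in Section 3, where the Alexander module of $C$ is assembled from those of $C'$ and $C''$ glued along the $m'm''$ transversal nodes. The two facts to extract are: (i) the variables $t_i$ attached to the components of $C''$ act trivially on the torsion module of $C$, so that $\Delta_C^{\multi}$ depends only on the variable $t$ of the irreducible curve $C'$; and (ii) the resulting one-variable order is a power of $(t-1)$. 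Fact (i) I expect to obtain from a Torres-type specialization together with the pencil structure of $C''$: adding the irreducible transversal curve $C'$ converts the non-torsion ``pencil directions'' of $\mathcal{A}_0(C'')$ (the differences of meridians of the pencil members) into torsion whose only surviving relation runs through the meridian $t$ linking $C'$ with $C''$, and Fact (ii) then follows because this surviving torsion is nilpotent in the augmentation.

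Finally, for the bound $1\leq k\leq m''-1$, the lower bound is just non-constancy together with part (1), while for the upper bound I would estimate the length of the $(t-1)$-torsion by the rank of the pencil part of $\mathcal{A}_0(C'')$, which for a degree-$m''$ curve of affine pencil type is at most $m''-1$ (one less than the maximal number of distinct pencil members). The hardest step will be Fact (i): rigorously showing that the $t_i$ do not appear requires controlling the gluing relations at all $m'm''$ nodes and verifying that they force triviality of the $t_i$-action on the torsion submodule, rather than merely bounding its size as in the proof of finiteness.
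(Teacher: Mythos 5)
Your reduction of parts (1)--(3) to Theorem~\ref{thmCharac} and Lemma~\ref{lempencil} via the dictionary between $\delta_0(C)$ and $\Delta_C^{\multi}$ is exactly the paper's route (Theorem~\ref{thmAAP} plus the remarks following it). One imprecision there: your justification of the equivalence ``$\Delta_C^{\multi}$ is a non-zero constant $\iff\delta_0(C)=0$'' as ``degree zero, i.e.\ is a unit'' is not correct as stated. The degree in Theorem~\ref{thmAAP} is the spread of linking-number degrees, so a degree-zero multivariable polynomial need only be homogeneous (e.g.\ $t_1-t_2$), not constant. The paper closes this gap in Remark~\ref{remconstant} by exhibiting a codimension-one minor of $B(0)$ with no non-constant homogeneous factors; some such argument is needed and is not purely formal.

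The genuine gap is in the \emph{Moreover} statement, which you correctly identify as the main task but do not actually prove. Your Fact (i) (the $t_i$ do not appear) and Fact (ii) (the order is a power of $t-1$) are stated as expectations, resting on an unspecified ``Torres-type specialization'' and on the unexplained claim that the surviving torsion is ``nilpotent in the augmentation''; neither is substantiated, and your proposed upper bound via ``the rank of the pencil part of $\mathcal{A}_0(C'')$'' is not justified either. The paper avoids all of this with a direct computation: it observes that the presentation matrix~\eqref{eqnpreunit} obtained in the proof of Lemma~\ref{thmTrivialAP} is reached from $B(0)$ by operations legitimate over $\mathbb{Z}\Gamma_0\cong\mathbb{Z}[t^{\pm 1},t_1^{\pm 1},\ldots,t_s^{\pm 1}]$, identifies $x_1\mapsto t$ and $y_j\mapsto t_jt^{-1}$ under abelianization, and exhibits the explicit $(m'+m''-1)$-minors $(t_j-1)^{m'-1}(1-t)^{m''}$ and $(t_j-t)(t_j-1)(1-t)^{m''-1}$ for each $j$. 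Since $\Delta_C^{\multi}$ divides the gcd of all such minors, it divides $(t-1)^{m''-1}$, which in a UFD forces the form $(t-1)^k$ with $k\leq m''-1$; non-constancy gives $k\geq 1$, and sharpness is delegated to a known example. Without producing these (or comparable) explicit minors, your outline does not yield the conclusion.
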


\begin{ack}
The authors would like to thank Mois\'es Herrad\'on Cueto and Lauren\c{t}iu Maxim for useful discussions.
\end{ack}

\section{Definitions of classical and higher order Alexander invariants}

In this section we recall the basic definitions of the notions that will be used throughout this note. For a more detailed explanation of the different Alexander invariants used in this paper, we refer the reader to \cite{libalex} (for univariable Alexander polynomials), \cite{suciu} (for multivariable Alexander polynomials), and \cite{MaxLeidy} (for higher order degrees of plane curve complements), for example.

\subsection{Alexander Polynomials}

Let $C=\{f(x,y)=0\}\subset \mathbb{C}^2$ be a plane curve given by the zeros of a reduced polynomial $f$, with complement $U:=\mathbb{C}^2\setminus C$, and denote by 
$G:=\pi_1(U)$ the fundamental group of its complement.
If $C$ has $s$ irreducible components, then \begin{equation}\label{eq1} H_1(G;\mathbb{Z})=H_1(U;\mathbb{Z})=G/G'=\mathbb{Z}^s,\end{equation} 
is generated by meridian loops about the smooth parts of the irreducible components of~$C$.

Let $\psi$ be the linking number homomorphism $G \overset{\psi}{\rightarrow} \mathbb{Z}$, given by 
$\alpha \mapsto \text{lk}(\alpha,C).$
Since $f$ is a reduced polynomial, $\psi$ is the map induced in fundamental groups by $f:U\rightarrow \mathbb{C}^*$. Let $\ab:G\rightarrow \mathbb{Z}^s$ be the abelianization homomorphism, which sends a positively oriented meridian about the $i$-th component of $C$ to the $i$-th element of the canonical basis of $\mathbb{Z}^s$. Let $\mathcal{L}^{\psi}$ and $\mathcal{L}^{\ab}$ be the local systems of $\mathbb{Q}[t^{\pm1}]$-modules and $\mathbb{Z}[t_1^{\pm 1},\ldots,t_s^{\pm 1}]$-modules induced by $\psi$ and $\ab$ respectively. More explicitely, $\mathcal{L}^{\psi}$ and $\mathcal{L}^{\ab}$ are given by
$$
\begin{array}{ccc}
G & \rightarrow & \Aut(\mathbb{Q}[t^{\pm 1}])\\
\gamma & \mapsto &(1\mapsto t^{\psi(\gamma)})
\end{array}
$$
and
$$
\begin{array}{ccc}
G & \rightarrow & \Aut(\mathbb{Z}[t_1^{\pm 1},\ldots,t_s^{\pm 1}])\\
\gamma & \mapsto &(1\mapsto t^{\ab(\gamma)})
\end{array}
$$
where $t^{(a_1,\ldots,a_s)}:=t_1^{a_1}\cdot\ldots\cdot t_s^{a_s}$ for all $(a_1,\ldots,a_s)\in\mathbb{Z}^s$.

For the following two definitions, let $F_i(M)$ be the $i$-th Fitting ideal of a module $M$ over a commutative ring. 

\begin{definition}\label{defnuni}
The \textit{univariable Alexander polynomial} of $U$, denoted by $\Delta^{\uni}_C(t)$ is defined as
$$
\Delta^{\uni}_C(t):=\text{a generator of }F_0\left(H_1(U;\mathcal{L}^{\psi})\right)\in\mathbb{Q}[t^{\pm1}],
$$
which is well defined up to multiplication by a unit of $\mathbb{Q}[t^{\pm1}]$.
\end{definition}
\begin{definition}
The \textit{multivariable Alexander polynomial} of $U$, denoted by $\Delta^{\multi}_C(t)$ is defined as
$$
\Delta^{\multi}_C(t_1,\ldots,t_s):=\gcd\left(F_1\left(H_1(U, u_0;\mathcal{L}^{\ab})\right)\right)\in\mathbb{Z}[t_1^{\pm1},\ldots,t_s^{\pm1}],
$$
where $u_0$ is a base point. It is well defined up to multiplication by a unit of $\mathbb{Z}[t_1^{\pm1},\ldots,t_s^{\pm 1}]$.
\label{defnmulti}
\end{definition}

\begin{remark}
Note that $H_1(U;\mathcal{L}^\psi)\cong H_1(U^\psi;\mathbb{Q})$ (\cite[Theorem 2.1]{kl}) as modules over $\mathbb{Q}[t^{\pm 1}]$, where $U^\psi$ is the infinite cyclic cover of $U$ induced by $\ker \psi$, whose deck group is isomorphic to $\mathbb{Z}$. Also note that the definition and computations are easier in the univariable case because $\mathbb{Q}[t^{\pm1}]$ is a PID. In the definition of the higher order degrees, a (noncommutative) PID is constructed to help generalize this construction of the univariable Alexander polynomial to other covers of $U$ that lie above $U^\psi$.
\end{remark}

\begin{remark}
If $C$ is irreducible, both definitions coincide. Indeed, from \cite[Remark 10]{EvaMax}  we know that $\Delta^{\multi}_C(t)$ divides $\Delta^{\uni}_C(t)$ in $\mathbb{Q}[t^{\pm 1}]$, and the same argument of the proof of \cite[Theorem 11]{EvaMax} (for $m=1$) shows that both polynomials are the same up to multiplication by a unit in $\mathbb{Q}[t^{\pm1}]$.
\end{remark}
\subsection{Higher Order Degrees}

\begin{definition}\label{hodg} The {\it rational derived series} of the group $G$ is defined inductively by:
$G_r ^{(0)}=G$, and for $n \geq 1$,
$$G_r ^{(n)}=\{g \in G_r ^{(n-1)} \mid g^k \in
[G_r ^{(n-1)},G_r ^{(n-1)}], \ \text{for some} \ k \in \mathbb{Z} \setminus \{0\}
\}.$$ 
\end{definition}

It is easy to see that $G_r ^{(i)} \triangleleft G_r ^{(j)}
\triangleleft G$, if $i \geq j \geq 0$. 
The successive quotients of the rational derived series are
torsion-free abelian groups. In fact (cf. \cite[Lemma 3.5]{Harvey}),
$$G_r^{(n)}/G_r^{(n+1)} \cong \left(G_r^{(n)}/[G_r^{(n)},
G_r^{(n)}] \right)/\{\mathbb{Z}-\text{torsion}\}.$$ Therefore, for 
$G=\pi_1(\mathbb{C}^2 \setminus C)$ we get from~\eqref{eq1} that $G'=G_r^{(1)}$.

The use of the rational derived series instead of the usual derived series is needed in order to avoid
zero-divisors in the group ring $\mathbb{Z}\Gamma_n$, where $$\Gamma_n:=G/G_r ^{(n+1)}.$$ 
$\Gamma_n$ is a poly-torsion-free-abelian group (PTFA), that is, it
admits a normal series of subgroups such that each of the successive
quotients of the series is torsion-free abelian (\cite[Corollary 3.6]{Harvey}). Thus, $\mathbb{Z}\Gamma_n$
is a right and left Ore domain, so it embeds in its classical right
ring of quotients $\mathcal{K}_n$, which is a skew-field. Every module over $\mathcal{K}_n$ is a free module, and such modules have a well-defined rank $\text{rk}_{\mathcal{K}_n}$ which is additive on short exact sequences.



In \cite{MaxLeidy}, one associates to any plane curve $C$ a
sequence of non-negative integers $\delta_n(C)$ as follows. Since $G'$ is in the kernel of $\psi$ (the linking number homomorphism),
we have a well-defined induced epimorphism $\bar{\psi} : \Gamma_n
\to \mathbb{Z}$. Let $\bar{\Gamma}_n =\ker \bar{\psi}$. Then $\bar{\Gamma}_n$
is a PTFA group, so $\mathbb{Z}\bar{\Gamma}_n$ has a right ring of quotients
$\mathbb{K}_n=(\mathbb{Z}\bar{\Gamma}_n)S_n ^{-1},$ where $S_n=\mathbb{Z}\bar{\Gamma}_n \setminus \{ 0 \}$.
Let $R_n:=(\mathbb{Z}\Gamma_n)S_n^{-1}.$ $R_n$ and $\mathcal{K}_n$ are flat left
$\mathbb{Z}\Gamma_n$-modules.

A very important role in what follows is played by the fact that $R_n$ is a PID; in fact, $R_n$ isomorphic to the ring of skew-Laurent
polynomials $\mathbb{K}_n[t^{\pm 1}]$. This can be seen as follows: by choosing a $t \in
\Gamma_n$ such that $\bar {\psi} (t)=1$, one obtains a splitting $\phi$
of $\bar{\psi}$, and the embedding $\mathbb{Z}\bar{\Gamma}_n \subset \mathbb{K}_n$
extends to an isomorphism $R_n \cong \mathbb{K}_n[t^{\pm 1}]$. However this
isomorphism depends in general on the choice of splitting of~$\bar\psi$.

\begin{definition}
\mbox{}
\begin{enumerate}
 \item The \emph{$n$-th order localized Alexander module} of the plane 
curve $C$ is defined to be $$\mathcal{A}_n(C)=H_1(U;R_n):= H_1(U;\mathbb{Z}\Gamma_n)\otimes_{\mathbb{Z}\Gamma_n}R_n,$$ viewed
as a right $R_n$-module.  The coefficients in the rightmost expression are the rank $1$ local system induced by the projection $G\twoheadrightarrow \Gamma_n$ \cite[section 5]{Harvey}. If we choose a splitting $\phi$ to
identify $R_n$ with $\mathbb{K}_n[t^{\pm 1}]$, we define
$\mathcal{A}^{\phi}_n(C)=H_1(U;\mathbb{K}_n[t^{\pm 1}])$.
 \item
The \emph{$n$-th order degree of $C$} is defined to be:
$$\delta_n(C)=\text{rk}_{\mathbb{K}_n} \mathcal{A}_n(C)=\text{rk}_{\mathbb{K}_n} \mathcal{A}^{\phi}_n(C).$$
\end{enumerate}
\end{definition}



The higher order degrees $\delta_n(C)$ are integral invariants of the fundamental
group $G$ of the complement (endowed with the linking number homomorphism). Indeed, by \cite{Harvey}, one has:
$$\delta_n(C)=\text{rk}_{\mathbb{K}_n} \left(
{G^{(n+1)}_r}/[G^{(n+1)}_r,G^{(n+1)}_r] \otimes_{\mathbb{Z}\bar{\Gamma}_n}
\mathbb{K}_n \right).$$
Note that since the isomorphism between $R_n$ and $\mathbb{K}_n[t^{\pm1}]$ depends on the choice of splitting, 
one cannot define a higher order version of the (univariable) Alexander polynomial in a canonical way. However, for any choice of splitting, 
the degree of the associated higher order Alexander polynomial is the same, hence this yields a well-defined invariant of $G$, which is exactly 
the higher order degree $\delta_n$ defined above.

\subsection{An effective method to compute $\delta_n(C)$}
\label{remFox}
The higher order degrees of $C$ may be computed by means of Fox free calculus from a presentation of $G=\pi_1(U)$, where $U:=\mathbb{C}^2\setminus C$ 
see \cite[Section 6]{Harvey} for details, although the computations can be quite tedious in practice. Such techniques will be used freely in this paper, as summarized in this section.

Consider the matrix of Fox derivatives for a presentation of $\pi_1(U)$ given by
$$
G=\pi_1(U)=\langle a_1,\ldots,a_m \text{\ \ \ } \vert\text{\ \ \ } r_j,\text{ \ }j=1,\ldots,l\rangle,
$$ 
that, is, the matrix
$$
\left(\frac{\partial r_j}{\partial a_i}\right)_{i,j} , \ \ \ 1\leq i \leq m, 1\leq j \leq l,
$$
which has entries in $\mathbb{Z}G$, and we take its involution (the $\mathbb{Z}$-linear map that takes elements of $G$ to their inverses)
$$
A=\left(\overline{\frac{\partial r_j}{\partial a_i}}\right)_{i,j} .
$$
Let $q_n:G\longrightarrow \Gamma_n$ be the projection, and let $q_n':\mathbb{Z}G\longrightarrow \mathbb{Z}\Gamma_n$ be the induced map on group rings. Let
$$
B(n)=A^{q_n'},
$$
that is, the matrix formed by the images of the entries of $A$ by $q_n'$. 

With this notation, $B(n)$ is a presentation matrix for the right $\mathbb{Z}\Gamma_n$-module $H_1(U, u_0;\mathbb{Z}\Gamma_n)$, where $u_0$ is some base point.

Moreover, since $R_n$ and $\mathcal{K}_n$ are flat over $\mathbb{Z}\Gamma_n$, we have that $B(n)$ is a presentation matrix for the right $R_n$-module (resp. $\mathcal{K}_n$-module) $H_1(U, u_0;R_n)$ (resp. $H_1(U, u_0;\mathcal{K}_n)$). 
By \cite[Proposition 5.6]{Harvey}, one obtains the following property for $B(n)$. 

\begin{lemma}\label{lem:Fox}
The rank of the left $\mathcal{K}_n$-module generated by the rows of $B(n)$ is $\leq m-1$, and
the rank of the left  $\mathcal{K}_n$-module generated by the rows of $B(n)$  is $m-1\Leftrightarrow \delta_n(C)$ is finite.
\end{lemma}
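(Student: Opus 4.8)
The plan is to realize $B(n)$ as (the matrix of) the second boundary operator in the equivariant cellular chain complex of $U$ and to reduce the statement to a rank count over the skew-field $\mathcal{K}_n$. Concretely, take the $2$-complex associated to the given presentation of $G$, with one $0$-cell, $m$ $1$-cells and $l$ $2$-cells; its chain complex with $\mathbb{Z}\Gamma_n$-coefficients is
$$
\mathbb{Z}\Gamma_n^{\,l}\xrightarrow{\ \partial_2\ }\mathbb{Z}\Gamma_n^{\,m}\xrightarrow{\ \partial_1\ }\mathbb{Z}\Gamma_n,
$$
where $\partial_2$ is given by the Fox Jacobian (so, after the involution, by $B(n)$) and $\partial_1$ is given by the column $(\overline{a_i-1})_i$. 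Tensoring with $\mathcal{K}_n$ (flat over $\mathbb{Z}\Gamma_n$) computes $H_*(U;\mathcal{K}_n)$, and the rank of the left $\mathcal{K}_n$-module spanned by the rows of $B(n)$ is exactly $\rk_{\mathcal{K}_n}(\operatorname{im}\partial_2)$, since over a skew-field the row and column ranks of a matrix agree.

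First I would establish the bound $\rk_{\mathcal{K}_n}(\operatorname{im}\partial_2)\le m-1$. Because $C$ is a nonempty curve, the linking-number map $\bar\psi\colon\Gamma_n\to\mathbb{Z}$ is onto, so $\Gamma_n$ is infinite; choosing $t\in\Gamma_n$ with $\bar\psi(t)=1$ gives $t-1\neq 0$, hence $t-1$ is invertible in $\mathcal{K}_n$. Consequently the augmentation coinvariants $H_0(U;\mathcal{K}_n)=\mathbb{Z}\otimes_{\mathbb{Z}\Gamma_n}\mathcal{K}_n$ vanish, i.e.\ $\partial_1$ is surjective over $\mathcal{K}_n$ and has rank $1$. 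Additivity of $\rk_{\mathcal{K}_n}$ then yields $\rk_{\mathcal{K}_n}(\ker\partial_1)=m-1$, and since $\partial_1\partial_2=0$ forces $\operatorname{im}\partial_2\subseteq\ker\partial_1$, we get $\rk_{\mathcal{K}_n}(B(n))=\rk_{\mathcal{K}_n}(\operatorname{im}\partial_2)\le m-1$. This is essentially the content of \cite[Proposition 5.6]{Harvey}.

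For the equivalence, the same bookkeeping gives the rank formula
$$
\rk_{\mathcal{K}_n}H_1(U;\mathcal{K}_n)=\rk_{\mathcal{K}_n}(\ker\partial_1)-\rk_{\mathcal{K}_n}(\operatorname{im}\partial_2)=(m-1)-\rk_{\mathcal{K}_n}(B(n)),
$$
so $\rk_{\mathcal{K}_n}(B(n))=m-1$ if and only if $H_1(U;\mathcal{K}_n)=0$. It remains to identify this vanishing with finiteness of $\delta_n$. Here I would use that $\mathcal{K}_n$ is the Ore quotient skew-field of the (noncommutative) PID $R_n\cong\mathbb{K}_n[t^{\pm1}]$, so flatness gives $H_1(U;\mathcal{K}_n)\cong\mathcal{A}_n(C)\otimes_{R_n}\mathcal{K}_n$. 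By the structure theorem over $R_n$, the finitely generated module $\mathcal{A}_n(C)$ splits as a free part plus torsion; the free part contributes infinite $\mathbb{K}_n$-rank while the torsion part is finite-dimensional over $\mathbb{K}_n$. Hence $\delta_n(C)=\rk_{\mathbb{K}_n}\mathcal{A}_n(C)$ is finite exactly when $\mathcal{A}_n(C)$ is $R_n$-torsion, which is exactly when $\mathcal{A}_n(C)\otimes_{R_n}\mathcal{K}_n=H_1(U;\mathcal{K}_n)=0$. Chaining the equivalences gives $\rk_{\mathcal{K}_n}(B(n))=m-1\Leftrightarrow\delta_n(C)<\infty$.

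The main obstacle is the bookkeeping linking the three coefficient rings: one must keep straight that \emph{finiteness} of $\delta_n$ is a statement about $\mathbb{K}_n$-rank, that it translates into $R_n$-torsion via the PID structure theorem, and that the latter is detected by the vanishing of $\mathcal{K}_n$-homology. The passage between the relative module presented by $B(n)$ and the absolute module $\mathcal{A}_n(C)$ must also be handled, but both fit into the long exact sequence of the pair $(U,u_0)$, whose only potentially nonzero boundary contribution is killed once $H_0(U;\mathcal{K}_n)=0$ has been established.
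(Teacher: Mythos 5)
Your argument is correct: the paper gives no proof of this lemma, deriving it directly from \cite[Proposition 5.6]{Harvey}, and your chain-complex computation (vanishing of $H_0(U;\mathcal{K}_n)$ from nontriviality of the coefficient system, the rank count $\rk_{\mathcal{K}_n}H_1(U;\mathcal{K}_n)=(m-1)-\rk_{\mathcal{K}_n}B(n)$, and the identification of finiteness of $\delta_n$ with $R_n$-torsionness of $\mathcal{A}_n(C)$ via the PID structure theorem) is precisely the standard proof of that cited result. You also correctly flag and dispose of the only delicate point, the passage between the relative module presented by $B(n)$ and the absolute module $\mathcal{A}_n(C)$.
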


By doing allowable row and column operations to $B(n)$ in $R_n\cong\mathbb{K}_n[t^{\pm1}]$ (\cite[Lemma 9.2]{Harvey}), 
we can turn $B(n)$ into a different presentation matrix of $H_1(U, u_0;R_n)$ of the form
$$
\left(\begin{array}{c|c}
D & 0\\
\hline
\array{ccc} 0&...&0 \endarray& \array{ccc} 0&...&0 \endarray
\end{array}\right)
$$
where $D$ is a diagonal matrix with entries in $\mathbb{K}_n[t^{\pm1}]$ and 
the last row is a row of zeroes. 
The following result is immediate and allows one to obtain the invariant~$\delta_n(C)$. In it, the degree of a non-zero element of $\mathbb{K}_n[t^{\pm1}]$ is defined as the difference between the highest and lowest exponents of $t$ appearing in the polynomial.

\begin{proposition}\label{prop:Fox}
Under the conditions above, the higher order degree $\delta_n(C)$ is the degree of the product of the diagonal 
elements of $D$ if all of those elements are non-zero, and $\delta_n(C)=\infty$ otherwise.
\end{proposition}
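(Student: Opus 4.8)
The plan is to read off the right $R_n$-module structure of the module presented by the diagonalized matrix, identify it with $\mathcal{A}_n(C)$ up to a single free summand, and then compute $\delta_n(C)=\rk_{\mathbb{K}_n}\mathcal{A}_n(C)$ directly from the diagonal entries using the structure theorem over the (noncommutative) PID $R_n\cong\mathbb{K}_n[t^{\pm1}]$. Since allowable row and column operations preserve the isomorphism class of the cokernel, the diagonal normal form displayed above shows that
\[
H_1(U,u_0;R_n)\cong\bigoplus_{d_i\neq 0} R_n/(d_i)\ \oplus\ R_n^{\,z+1},
\]
where $d_1,\dots,d_k$ are the diagonal entries of $D$, the integer $z=\#\{i: d_i=0\}$ counts the vanishing ones (each contributing a free summand $R_n/(0)\cong R_n$), and the extra $+1$ records the explicit bottom row of zeroes in the normal form.

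The first step is to relate this \emph{relative} module to the \emph{absolute} module $\mathcal{A}_n(C)=H_1(U;R_n)$, whose $\mathbb{K}_n$-rank is by definition $\delta_n(C)$. From the long exact sequence of the pair $(U,u_0)$ with coefficients in the flat module $R_n$, together with $H_1(u_0;R_n)=0$ and $H_0(u_0;R_n)=R_n$, one extracts a short exact sequence $0\to\mathcal{A}_n(C)\to H_1(U,u_0;R_n)\to I\to 0$, where $I\subseteq R_n$ is the image of the connecting homomorphism, that is, the kernel of the augmentation $R_n=H_0(u_0;R_n)\to H_0(U;R_n)$. This kernel is nonzero because the augmentation ideal of $\mathbb{Z}\Gamma_n$ is nonzero (as $\Gamma_n$ surjects onto $\mathbb{Z}$) and $R_n$ is a domain; being a nonzero submodule of $R_n$ over the PID $R_n$, it is free of rank one. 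Hence the sequence splits and $H_1(U,u_0;R_n)\cong\mathcal{A}_n(C)\oplus R_n$. Comparing free ranks and torsion submodules with the decomposition above (uniqueness in the structure theorem) yields $\mathcal{A}_n(C)\cong\bigoplus_{d_i\neq0}R_n/(d_i)\oplus R_n^{\,z}$.

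It remains to take $\mathbb{K}_n$-ranks. The rank $\rk_{\mathbb{K}_n}$ is additive on direct sums, and for nonzero $d\in\mathbb{K}_n[t^{\pm1}]$ the quotient $R_n/(d)$ is a $\mathbb{K}_n$-vector space with basis $\{1,t,\dots,t^{\deg d-1}\}$ by the division algorithm in the skew-Laurent ring, so $\rk_{\mathbb{K}_n}R_n/(d)=\deg d$; meanwhile $R_n$ itself is infinite-dimensional over $\mathbb{K}_n$. Consequently, if all $d_i\neq0$ (so $z=0$) the module $\mathcal{A}_n(C)$ is $R_n$-torsion and $\delta_n(C)=\sum_i\deg d_i=\deg\!\big(\prod_i d_i\big)$, whereas if some $d_i=0$ (so $z\geq1$) the surviving free summand forces $\delta_n(C)=\infty$. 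This is exactly the asserted dichotomy.

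The main obstacle is the passage from the relative module $H_1(U,u_0;R_n)$, which is what $B(n)$ actually presents, to the absolute module $\mathcal{A}_n(C)$ whose rank defines $\delta_n(C)$: one must verify that the connecting homomorphism contributes precisely one free summand, equivalently that its image is free of rank one. This is where the explicit extra zero row in the normal form is accounted for, and it is consistent with Lemma~\ref{lem:Fox}, which records that the row rank of $B(n)$ drops to $m-1$ exactly when $\delta_n(C)$ is finite, i.e.\ when the relative module has free rank one. The remaining ingredients --- flatness of $R_n$ over $\mathbb{Z}\Gamma_n$, the splitting over the PID, and the degree computation for $R_n/(d)$ --- are routine, which is why the statement is essentially immediate once the normal form is in hand.
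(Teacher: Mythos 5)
Your argument is correct, and it supplies in full the justification that the paper leaves implicit: Proposition~\ref{prop:Fox} is stated there without proof (``the following result is immediate''), the intended reasoning being exactly yours --- the normal form presents $H_1(U,u_0;R_n)$ as a sum of cyclic modules, the basepoint contributes the single extra free rank-one summand (matching the guaranteed zero row and Lemma~\ref{lem:Fox}), and $\rk_{\mathbb{K}_n}R_n/dR_n=\deg d$ over the skew PID $\mathbb{K}_n[t^{\pm 1}]$. Since this is the standard route (cf.\ Harvey's computations via Fox calculus), there is nothing to add.
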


\section{Vanishing of higher order degrees of transversal intersections.}

The goal of this section is to prove Theorem~\ref{thmCharac}, which characterizes the vanishing of the higher 
order degrees of a curve that is the union of two curves that intersect transversally and do not intersect at infinity.

\begin{remark}
The right hand side of the ``$\Leftrightarrow$'' equivalences in Theorem~\ref{thmCharac} is always satisfied in the cases described in Theorem~\ref{thmfinite}.
\label{remFinite}
\end{remark}

The proof of Theorem~\ref{thmCharac} is going to be broken down into $3$ lemmas (\ref{prop0},~\ref{thmTrivialAP}, and~\ref{thmTrans}). Before we prove those, let us write down some facts that will be used throughout the section.

\begin{proposition}[\cite{MaxLeidy}, Remark 3.3, Remark 3.9, Proposition 5.1]\label{propIrr}
If $C$ is an irreducible curve, then 
$$
\delta_0(C)=0\Longleftrightarrow G_r^{(1)}=G_r^{(2)}\Longleftrightarrow \delta_n(C)=0 \text{ \ \ for all \ \ }n\geq 0.
$$
\end{proposition}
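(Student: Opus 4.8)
The statement to prove is Proposition~\ref{propIrr}: for an irreducible curve $C$,
$$
\delta_0(C)=0 \Longleftrightarrow G_r^{(1)}=G_r^{(2)} \Longleftrightarrow \delta_n(C)=0 \text{ for all } n\geq 0.
$$
Let me sketch a proof.

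The plan is to establish the two equivalences separately, in both cases reducing everything to the behaviour of the successive quotients of the rational derived series via the group-theoretic formula recorded above,
$$\delta_n(C)=\operatorname{rk}_{\mathbb{K}_n}\bigl(G_r^{(n+1)}/[G_r^{(n+1)},G_r^{(n+1)}]\otimes_{\mathbb{Z}\bar\Gamma_n}\mathbb{K}_n\bigr).$$
The key structural input is that $C$ is irreducible, so $s=1$ and $\Gamma_0=G/G_r^{(1)}=G/G'=\mathbb{Z}$, on which $\bar\psi$ is an isomorphism.

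\textbf{First equivalence.} For $\delta_0(C)=0\Leftrightarrow G_r^{(1)}=G_r^{(2)}$ I would first exploit irreducibility to conclude that $\bar\Gamma_0=\ker\bar\psi$ is trivial, whence $\mathbb{Z}\bar\Gamma_0=\mathbb{Z}$ and $\mathbb{K}_0=\mathbb{Q}$. The formula then collapses to $\delta_0(C)=\dim_{\mathbb{Q}}\bigl(G'/[G',G']\otimes_{\mathbb{Z}}\mathbb{Q}\bigr)$, i.e. the free $\mathbb{Z}$-rank of the abelian group $G'/[G',G']$. Next I would identify this rank with that of the first quotient of the rational derived series: since $G_r^{(2)}$ is by definition the preimage of the torsion subgroup of $G_r^{(1)}/[G_r^{(1)},G_r^{(1)}]=G'/[G',G']$, the quotient $G_r^{(1)}/G_r^{(2)}$ is precisely the torsion-free part of $G'/[G',G']$, so $\delta_0(C)=\operatorname{rk}_{\mathbb{Z}}\bigl(G_r^{(1)}/G_r^{(2)}\bigr)$. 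As $G_r^{(1)}/G_r^{(2)}$ is torsion-free, its rank vanishes if and only if the group itself is trivial, giving the claim. This is the step where irreducibility is genuinely used: it is what forces $\bar\Gamma_0$ to be trivial and hence makes level-zero localization mere rationalization.

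\textbf{Second equivalence.} The implication $\delta_n(C)=0$ for all $n\Rightarrow G_r^{(1)}=G_r^{(2)}$ is immediate from the case $n=0$ just treated. For the converse I would first show that $G_r^{(1)}=G_r^{(2)}$ forces the whole rational derived series to stabilize: writing $\Phi(H)$ for the preimage in $H$ of the torsion subgroup of $H/[H,H]$, one has $G_r^{(n+1)}=\Phi(G_r^{(n)})$, and since $\Phi(G_r^{(n)})$ depends only on the group $G_r^{(n)}$, the hypothesis $G_r^{(1)}=G_r^{(2)}$ propagates by induction to $G_r^{(n)}=G_r^{(1)}$ for all $n\geq 1$. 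Thus $G_r^{(n+1)}=G_r^{(n+2)}$ for every $n\geq 0$, meaning $G_r^{(n+1)}/[G_r^{(n+1)},G_r^{(n+1)}]$ is entirely $\mathbb{Z}$-torsion. Finally, because every nonzero integer lies in $S_n=\mathbb{Z}\bar\Gamma_n\setminus\{0\}$ and $\mathbb{K}_n$ is the Ore localization at $S_n$, any $\mathbb{Z}$-torsion element is annihilated by $-\otimes_{\mathbb{Z}\bar\Gamma_n}\mathbb{K}_n$, so the formula gives $\delta_n(C)=0$ for all $n\geq 0$.

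\textbf{Main obstacle.} The computations themselves are short once the homological formula for $\delta_n$ is available, so the delicate points are conceptual rather than technical: recognizing that irreducibility is exactly what trivializes $\bar\Gamma_0$ (so that $\delta_0$ is a naive $\mathbb{Q}$-rank), and the self-referential stabilization $G_r^{(1)}=G_r^{(2)}\Rightarrow G_r^{(n)}=G_r^{(1)}$ for all $n$. I expect the verification that $\mathbb{Z}$-torsion dies after tensoring with the skew field $\mathbb{K}_n$ to require the most care, since it is precisely what upgrades the single hypothesis $G_r^{(1)}=G_r^{(2)}$ into the vanishing of \emph{all} the higher order degrees.
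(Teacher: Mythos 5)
Your proof is correct and follows essentially the same route as the source the paper cites for this proposition ([MaxLeidy]) and the paper's own use of the rank formula in equation~(3.1): irreducibility trivializes $\bar\Gamma_0$ so that $\delta_0(C)=\rk_{\mathbb{Z}}\bigl(G_r^{(1)}/G_r^{(2)}\bigr)$, and the stabilization $G_r^{(1)}=G_r^{(2)}\Rightarrow G_r^{(n)}=G_r^{(1)}$ together with the fact that $\mathbb{Z}$-torsion dies under $-\otimes_{\mathbb{Z}\bar\Gamma_n}\mathbb{K}_n$ gives the vanishing of all $\delta_n$. No gaps.
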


\begin{remark}
There are three curves in the statement of Theorem~\ref{thmCharac}, namely $C$, $C'$ and $C''$. 
We will use $'$ or $''$ to refer to the objects corresponding to $C'$ and $C''$ respectively. 
For example, $U':=\mathbb{C}^2\backslash C'$, $G'':=\pi_1(U'')$, etc.
\label{remNot}
\end{remark}

\begin{theorem}[The Oka-Sakamoto theorem, \cite{OS}]\label{thmos}
Let $C=C'\cup C''\subset \mathbb{C}^2$ be the union of two affine plane curves, with $\deg C'=m'$ and $\deg C''=m''$. Suppose that $C'\cap C''$ consists on $m'm''$ distinct points in $\mathbb{C}^2$. Then,
$G\cong G'\times G''.$
\end{theorem}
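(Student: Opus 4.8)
The plan is to realize the isomorphism $G\cong G'\times G''$ as the map induced by the two natural inclusions of complements, and then to verify bijectivity through Zariski--van Kampen presentations. Since $C'\subset C$ and $C''\subset C$, the inclusions $U\hookrightarrow U'$ and $U\hookrightarrow U''$ induce homomorphisms $\iota'_*\colon G\to G'$ and $\iota''_*\colon G\to G''$, hence a homomorphism $\Phi=(\iota'_*,\iota''_*)\colon G\to G'\times G''$. The elementary point is how $\Phi$ acts on meridians: a small meridian around a smooth point of $C''$ (chosen away from $C'\cap C''$) bounds a transverse disk disjoint from $C'$, so it becomes nullhomotopic in $U'$. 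Thus $\iota'_*$ kills the meridians of $C''$ and carries the meridians of $C'$ onto a generating set of $G'$, and symmetrically for $\iota''_*$. Consequently $\Phi$ sends each meridian of $C'$ to a pair of the form $(\bar a,1)$ and each meridian of $C''$ to a pair of the form $(1,\bar b)$; these generate $G'\times G''$, which already yields surjectivity of $\Phi$.

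The real content is injectivity, which I would obtain by comparing group presentations. First I would fix a generic linear projection $p\colon\mathbb{C}^2\to\mathbb{C}$ and apply the Zariski--van Kampen theorem to $C$, $C'$ and $C''$ at once. A generic fiber $\ell$ meets $C$ in $m'+m''$ points, of which $m'$ lie on $C'$ and $m''$ on $C''$; the associated meridians $x_1,\dots,x_{m'}$ (around $C'$) and $y_1,\dots,y_{m''}$ (around $C''$) freely generate $\pi_1(\ell\setminus C)$ and surject onto $G$, subject to the relations produced by the braid monodromy around the special fibers of $p$. After a generic choice of $p$ one may assume that no fiber is special for two distinct reasons at once, so the special fibers split into three types: those over critical values of $p|_{C'}$, imposing relations $R'$ among the $x_i$ only and recovering the van Kampen presentation of $G'$; those over critical values of $p|_{C''}$, imposing relations $R''$ among the $y_j$ only and recovering $G''$; and those over the $m'm''$ points of $C'\cap C''$.

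Here the hypothesis is decisive. By B\'ezout's theorem $m'm''$ is the maximal possible number of intersection points, so $C'$ and $C''$ do not meet at infinity and each intersection point is an ordinary node where one branch of $C'$ meets one branch of $C''$ transversally; the local fundamental group of the complement of such a node is abelian, and the corresponding braid is a full twist whose van Kampen relation is a commutation relation between (a conjugate of) some $x_i$ and some $y_j$. I would therefore be reduced to showing that the presentation $\langle x_i,y_j\mid R',R'',\text{node commutators}\rangle$ of $G$ coincides with the presentation $\langle x_i,y_j\mid R',R'',[x_i,y_j]\ \forall\, i,j\rangle$ of $G'\times G''$ that is compatible with $\Phi$.

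The main obstacle is exactly this comparison: proving that the $m'm''$ node commutation relations, taken modulo $R'$ and $R''$, force every $x_i$ to commute with every $y_j$. This is where the maximality $\#(C'\cap C'')=m'm''$ is used in full force. Since each branch of $C'$ meets each branch of $C''$ precisely once, the braid monodromy links every $C'$-strand with every $C''$-strand, so conjugating the node relations back to the canonical meridians on $\ell$ and using $R'$, $R''$ to identify conjugate meridians recovers all the commutators $[x_i,y_j]$. Once this bookkeeping of the braid monodromy is carried out, the presentation of $G$ matches that of $G'\times G''$ under $\Phi$, showing $\Phi$ is injective and hence an isomorphism. I expect the delicate part to be organizing this monodromy argument cleanly, since it is precisely the step that fails without the transversality and maximal-intersection assumptions.
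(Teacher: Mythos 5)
The paper does not actually prove this statement: it is quoted directly from Oka--Sakamoto \cite{OS} and used as a black box, so there is no internal proof to compare yours against. Judged on its own merits, your setup is correct --- the map $\Phi=(\iota'_*,\iota''_*)\colon G\to G'\times G''$ induced by the two inclusions of complements, and the surjectivity argument via meridians (meridians of $C''$ bound disks missing $C'$, hence die under $\iota'_*$), are both fine. The problem is injectivity, where your argument has a genuine gap exactly at the step you flag as ``delicate.''

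Two points. First, the claim that the special fibers over critical values of $p|_{C'}$ impose relations ``among the $x_i$ only'' is not justified: in a generic fiber $\ell$ the points of $\ell\cap C'$ and $\ell\cap C''$ are interleaved, so the van Kampen relations coming from a critical value of $p|_{C'}$ have the form $x_i=\beta(x_i)$ where $\beta(x_i)$ is a word that in general involves the $y_j$ as conjugating letters; you only recover a clean presentation of $G'$ in the $x_i$ after the commutators $[x_i,y_j]$ are already available, which makes the argument circular as stated. Second, and more seriously, the assertion that the $m'm''$ node relations, taken modulo $R'$ and $R''$, force every $x_i$ to commute with every $y_j$ is the entire content of the theorem, and you assert it rather than prove it. Each node contributes one relation $[wx_iw^{-1},vy_jv^{-1}]=1$ with $w,v$ words in all the generators determined by the braid monodromy, and there is no a priori reason that $m'm''$ such relations normally generate the full set of commutators; carrying out that deduction is precisely the delicate combinatorial argument that occupies the original Oka--Sakamoto paper, and it is exactly the step that fails when the intersection is not maximal. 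As written, your proposal is a correct statement of the standard strategy together with an acknowledgment of the hard step, not a proof of it.
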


\begin{remark}\label{rempresentation}
In the conditions of the Oka-Sakamoto theorem, we can consider a presentation for $G$ with generators $a_1,\ldots, a_{m'},b_1,\ldots,b_{m''}$, 
where the $a_i$'s are a choice of positively oriented meridians around $C'$ generating $G'$, and the $b_j$'s are a choice of positively oriented 
meridians around irreducible components of $C''$ generating $G''$ \cite{lib2}. 
Let $R'$ and $R''$ be a set of relations of a presentation of $G'$ and $G''$ where the generators are the $a$'s and $b$'s respectively. Then, we have 
the following presentation for $G$:
$$
G=\langle a_1,\ldots, a_{m'},b_1,\ldots,b_{m''} \vert [a_i,b_j]\text{ for all } i=1,\ldots,m'\text{ and}\text{\ } j=1,\ldots, m''; R'; R''\rangle.
$$
\end{remark}
The first of our three key lemmas deals with the $0$-th order degree of a union of two transversal irreducible curves.

\begin{lemma}\label{prop0}
Let $C=C'\cup C''\subset \mathbb{C}^2$ be the union of two irreducible affine plane curves, with $\deg C'=m'$ and $\deg C''=m''$. 
Suppose that $C'\cap C''$ consists on $m'm''$ distinct points in $\mathbb{C}^2$. Then,
$
\delta_0(C)=0.
$
\end{lemma}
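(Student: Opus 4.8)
The plan is to bypass the Fox-calculus presentation and compute $\delta_0(C)$ directly from Harvey's description of the higher-order degree as the rank of a derived quotient, exploiting the product structure $G\cong G'\times G''$ from Theorem~\ref{thmos} (throughout, $G'=\pi_1(U')$ and $G''=\pi_1(U'')$ as in Remark~\ref{remNot}). First I would pin down the coefficient rings at level $n=0$. Since $C'$ and $C''$ are irreducible, $C$ has $s=2$ components and $\Gamma_0=G/G_r^{(1)}=H_1(U;\mathbb{Z})=\mathbb{Z}^2=\langle x\rangle\times\langle y\rangle$, where $x,y$ are the classes of meridians around $C'$ and $C''$. Each such meridian links $C$ once, so $\bar\psi(x)=\bar\psi(y)=1$ and $\bar\Gamma_0=\ker\bar\psi=\langle xy^{-1}\rangle\cong\mathbb{Z}$. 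As $\Gamma_0$ is abelian there are no skew-field subtleties: $\mathbb{Z}\bar\Gamma_0=\mathbb{Z}[(xy^{-1})^{\pm1}]$ is a domain and $\mathbb{K}_0$ is just its fraction field $\mathbb{Q}(xy^{-1})$. By Harvey's formula quoted above, $\delta_0(C)=\rk_{\mathbb{K}_0}\bigl(M\otimes_{\mathbb{Z}\bar\Gamma_0}\mathbb{K}_0\bigr)$, where $M:=G_r^{(1)}/[G_r^{(1)},G_r^{(1)}]$ (note $G_r^{(1)}=[G,G]$ because $G^{\mathrm{ab}}$ is torsion-free), viewed as a $\mathbb{Z}\Gamma_0$-module via conjugation and then restricted to $\mathbb{Z}\bar\Gamma_0$.

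The heart of the argument is to decompose $M$ using $G\cong G'\times G''$. Since $[A\times B,A\times B]=[A,A]\times[B,B]$, one gets $[G,G]=[G',G']\times[G'',G'']$ and, iterating, $[[G,G],[G,G]]=[[G',G'],[G',G']]\times[[G'',G''],[G'',G'']]$; both product decompositions are preserved by conjugation, so $M\cong M'\oplus M''$ as $\mathbb{Z}\Gamma_0$-modules, where $M'$ and $M''$ are the analogous derived quotients of $G'$ and $G''$, i.e.\ the classical Alexander modules of $C'$ and $C''$. The key point is the module structure: because every meridian $b_j$ of $C''$ commutes with all of $G'$ in the product, conjugation by $y$ acts as the identity on $M'$ while $x$ acts as the Alexander variable; symmetrically $x$ acts trivially on $M''$ and $y$ acts as the Alexander variable. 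Restricting to $\mathbb{Z}\bar\Gamma_0=\mathbb{Z}[(xy^{-1})^{\pm1}]$, the generator $xy^{-1}$ therefore acts on $M'$ exactly as $x$ does and on $M''$ exactly as $y^{-1}$ does, so $M'$ (resp.\ $M''$) is identified with the Alexander module of $C'$ (resp.\ $C''$) as a module over $\mathbb{Z}[(xy^{-1})^{\pm1}]\cong\mathbb{Z}[x^{\pm1}]$ (resp.\ $\cong\mathbb{Z}[y^{\pm1}]$).

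Finally I would localize. Under this identification $M'\otimes_{\mathbb{Z}\bar\Gamma_0}\mathbb{K}_0$ becomes $(M'\otimes\mathbb{Q})\otimes_{\mathbb{Q}[x^{\pm1}]}\mathbb{Q}(x)$, the rational Alexander module of $C'$ passed to the fraction field. Since $C'$ is irreducible, Theorem~\ref{thmfinite}(1) gives $\delta_0(C')<\infty$, which says precisely that $M'\otimes\mathbb{Q}$ is a finitely generated torsion $\mathbb{Q}[x^{\pm1}]$-module; hence it vanishes after inverting all nonzero elements, so $M'\otimes_{\mathbb{Z}\bar\Gamma_0}\mathbb{K}_0=0$, and likewise $M''\otimes_{\mathbb{Z}\bar\Gamma_0}\mathbb{K}_0=0$. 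Additivity of rank on the direct sum then yields $\delta_0(C)=\rk_{\mathbb{K}_0}(M'\otimes\mathbb{K}_0)+\rk_{\mathbb{K}_0}(M''\otimes\mathbb{K}_0)=0$.

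I expect the main obstacle to be justifying the $\mathbb{Z}\Gamma_0$-module identifications cleanly: verifying that $y$ (resp.\ $x$) acts as the identity on $M'$ (resp.\ $M''$), and that restricting the two-variable action to the rank-one subgroup $\langle xy^{-1}\rangle$ genuinely reproduces the one-variable Alexander module structure with the correct variable (and its \emph{inverse} for the second factor). Everything else is formal once the product decomposition of the derived subgroups is in place. An alternative, more computational route would run the Fox-calculus machinery of Section~\ref{remFox} on the presentation of Remark~\ref{rempresentation}, where the commutator relators $[a_i,b_j]$ contribute the coupling entries $1-y$ and $x-1$; but the derived-series computation above seems both shorter and more transparent.
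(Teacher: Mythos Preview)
Your proposal is correct and follows essentially the same route as the paper: use Oka--Sakamoto to write $G\cong G'\times G''$, split the derived quotient as $M'\oplus M''$, invoke Harvey's formula for $\delta_0$, and use the finiteness of $\delta_0(C')$ and $\delta_0(C'')$ (equivalently, the finite $\mathbb{Q}$-rank of $M'$ and $M''$) to conclude that each summand becomes torsion, hence zero, after passing to the fraction field $\mathbb{K}_0$. The only difference is cosmetic: you spell out the $\mathbb{Z}\bar\Gamma_0$-action on each summand via the variable $xy^{-1}$, whereas the paper simply observes that $A=M'$ and $B=M''$ are finite-rank abelian groups and that $a,at,\dots,at^k$ must be $\mathbb{Z}$-linearly dependent for $k$ larger than the rank---the same torsion argument in more elementary clothing.
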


\begin{proof}
By Theorem~\ref{thmos}, $G\cong G'\times G''$. We have that
$$
G_r^{(1)}/G_r^{(2)}\cong (G')_r^{(1)}/(G')_r^{(2)}\times(G'')_r^{(1)}/(G'')_r^{(2)},
$$

By \cite{Harvey}, one has:
$$\delta_n(C)=\text{rk}_{\mathbb{K}_n} \left(
{G^{(n+1)}_r}/[G^{(n+1)}_r,G^{(n+1)}_r] \otimes_{\mathbb{Z}\bar{\Gamma}_n}
\mathbb{K}_n \right).$$

Notice that the tensor product kills the $\mathbb{Z}$-torsion, so this is equivalent to
\begin{equation}\label{eq3} \delta_n(C)=\text{rk}_{\mathbb{K}_n} \left(
{G^{(n+1)}_r}/G^{(n+2)}_r \otimes_{\mathbb{Z}\bar{\Gamma}_n}
\mathbb{K}_n \right).\end{equation}

Note that $\mathbb{Z}\bar\Gamma_0\cong\mathbb{Z}[t^{\pm 1}]$ in this case. Since both $C'$ and $C''$ are irreducible, we have that $\Gamma'_0\cong \Gamma''_0\cong \mathbb{Z}$, $\bar{\Gamma}'_0\cong \bar{\Gamma}'_0$ are the trivial group, and $\mathbb{K}'_0\cong \mathbb{K}''_0\cong \mathbb{Q}$. By Proposition~\ref{propIrr}, $\delta_n$ of any irreducible curve is finite for all $n\geq 0$, so
$$
\delta_0(C')=\rk_{\mathbb{Q}} \left(
(G')_r^{(1)}/(G')_r^{(2)} \otimes_{\mathbb{Z}}
\mathbb{Q} \right)<\infty
$$
and the same statement holds for $C''$, which means that $(G')_r^{(1)}/(G')_r^{(2)}$ and \linebreak$(G'')_r^{(1)}/(G'')_r^{(2)}$ are both finite rank free abelian groups. Let us call them $A$ and $B$ for simplicity.

Now,
$$
\delta_0(C)=\text{rk}_{Q(\mathbb{Z}[t^{\pm 1}])} \left(
(A\oplus B) \otimes_{\mathbb{Z}[t^{\pm 1}]}Q(\mathbb{Z}[t^{\pm 1}]) \right),
$$
where $Q(\cdot)$ denotes taking the field of quotients.

Let $a\in A$, and let $k$ be an integer bigger than the rank of $A$. 
We have that $a, at,\ldots, at^k$ are linearly dependent, so $a$ is annihilated 
by some polynomial in $\mathbb{Z}[t^{\pm 1}]$. The same holds for all $b\in B$. Hence,
$
\delta_0(C)=0.
$
\end{proof}
The proofs of lemmas~\ref{thmTrivialAP} and~\ref{thmTrans} consist on applying the techniques of Section~\ref{remFox} to conveniently chosen presentations of the fundamental group.
\begin{lemma}\label{thmTrivialAP}
Let $n$ be a fixed integer, with $n\geq 0$. Let $C=C'\cup C''\subset \mathbb{C}^2$ be the union of two affine plane curves, with $\deg C'=m'$ and $\deg C''=m''$. Suppose that $C'\cap C''$ consists on $m'm''$ distinct points in $\mathbb{C}^2$. Suppose that $C'$ is irreducible, with $\delta_0(C')=0$. Then, $\delta_n(C)$ is finite, and
$\delta_n(C)=0 \Leftrightarrow \delta_n(C'')<\infty.$
\end{lemma}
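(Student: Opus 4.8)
The plan is to run the Fox calculus of Section~\ref{remFox} on the product presentation of Remark~\ref{rempresentation}, after first pinning down the groups $\Gamma_n,\bar\Gamma_n$ and the ring $R_n$. By Theorem~\ref{thmos} we have $G\cong G'\times G''$, and since the rational derived series of a direct product is the product of the rational derived series (which follows inductively from $[H\times K,H\times K]=[H,H]\times[K,K]$ and the definition of $G_r^{(1)}$), this gives $\Gamma_n\cong\Gamma_n'\times\Gamma_n''$. Because $C'$ is irreducible with $\delta_0(C')=0$, Proposition~\ref{propIrr} yields $(G')_r^{(1)}=(G')_r^{(2)}$, which propagates to $(G')_r^{(k)}=(G')_r^{(1)}$ for every $k\ge1$, so $\Gamma_n'=G'/(G')_r^{(1)}\cong\mathbb{Z}$, generated by the common image $t$ of all the meridians $a_i$. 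Thus $\Gamma_n\cong\mathbb{Z}\times\Gamma_n''$ with $t$ central. I would then identify $\bar\Gamma_n=\ker\bar\psi\cong\Gamma_n''$ (via $\gamma''\mapsto(-\bar\psi''(\gamma''),\gamma'')$) and conclude $\mathbb{K}_n\cong\mathcal{K}_n''$, the Ore skew field of $\mathbb{Z}\Gamma_n''$, and $R_n\cong\mathbb{K}_n[t^{\pm1}]$ with $t$ \emph{central}. This centrality is the technical crux that neutralizes the skew structure of the splitting variable.

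Next I would write $B(n)$ for the presentation of Remark~\ref{rempresentation} in block form: the commutator relations $[a_i,b_j]$ contribute $(1-\beta_j^{-1})$ in the $a_i$-row and $(t^{-1}-1)$ in the $b_j$-row; the block $R'$ is the involuted Fox matrix $P'$ of $C'$ (entries in $\mathbb{Q}[t^{\pm1}]$); the block $R''$ is the involuted Fox matrix $P''$ of $C''$ (entries in $\mathbb{Z}\Gamma_n''\hookrightarrow R_n$). Since $\delta_0(C')=0$ is finite, the Alexander module of $C'$ vanishes, so over the central subring $\mathbb{Q}[t^{\pm1}]$ the matrix $P'$ has Smith form $\left(\begin{smallmatrix}I_{m'-1}&0\\0&0\end{smallmatrix}\right)$; performing these allowable operations deletes the identity block together with $m'-1$ of the $a$-rows. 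By the Fox fundamental identity the surviving $a$-row $a_*$ has commutator entries $(1-\beta_j^{-1})$, so after collapsing the then-identical commutator columns I reach a reduced presentation matrix $\tilde B$ with rows $a_*,b_1,\dots,b_{m''}$ and relations $e_{b_j}(t^{-1}-1)=-e_{a_*}(1-\beta_j^{-1})$ and $\sum_l e_{b_l}(P'')_{l,k}=0$.

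Finiteness then follows quickly: inverting $(t-1)$ makes $(t^{-1}-1)$ a unit, so each $e_{b_j}$ is expressed through $e_{a_*}$, and substituting into the $R''$-relations produces the coefficient $\sum_l(1-\beta_l^{-1})(P'')_{l,k}$, which vanishes by the Fox fundamental identity for $C''$. Hence $\tilde B$ has $\mathcal{K}_n$-rank exactly $m''$, so $\delta_n(C)<\infty$ by Lemma~\ref{lem:Fox}; moreover $\text{coker}(\tilde B)[(t-1)^{-1}]$ is free of rank one on $e_{a_*}$, so all torsion is $(t-1)$-primary and $\delta_n(C)=0$ iff there is no $(t-1)$-torsion. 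To detect the latter I would specialize $t\mapsto1$: since the image of $\delta\in\mathbb{Z}\Gamma_n''$ in $R_n$ restricts at $t=1$ to its image in $\mathcal{K}_n''$, the block $P''$ specializes to the genuine Fox matrix of $C''$ over $\mathcal{K}_n''$, and $\tilde B|_{t=1}$ becomes block-diagonal of rank $1+r''$, where $r''$ is the $\mathcal{K}_n''$-row-rank of $P''$. The number of invariant factors divisible by $(t-1)$ equals the rank drop $m''-(1+r'')=(m''-1)-r''$, which is zero precisely when $r''=m''-1$. By Lemma~\ref{lem:Fox} applied to $C''$ this holds exactly when $\delta_n(C'')<\infty$, giving $\delta_n(C)=0\Leftrightarrow\delta_n(C'')<\infty$.

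The main obstacle I expect is the ring-theoretic bookkeeping in the last step: establishing that $t$ is central in $R_n\cong\mathbb{K}_n[t^{\pm1}]$, that $\mathbb{Z}\Gamma_n''\hookrightarrow R_n$ specializes at $t=1$ to the standard map $\mathbb{Z}\Gamma_n''\to\mathcal{K}_n''$, and that rank over a skew field is preserved under this extension so that the rank-drop count genuinely reads off $r''$. Once centrality of $t$ is secured, the $(t-1)$-localization and the $t=1$-specialization arguments are clean, and the equivalence drops out of Lemma~\ref{lem:Fox} for $C''$.
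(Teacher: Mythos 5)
Your proposal is correct, and it reaches the conclusion by a genuinely different route than the paper. The paper also starts from the Oka--Sakamoto product presentation, but then performs an explicit, bare-hands Gaussian elimination on the full Fox matrix $B(n)$: it changes generators to $x_1=a_1$, $x_i=a_ia_1^{-1}$, $y_j=b_ja_1^{-1}$, splits into cases (treating $C''$ irreducible with $\delta_0(C'')=0$ separately via the triviality of $G_r^{(n+1)}/G_r^{(n+2)}$, and arranging after reordering that $y_1\neq y_2$ in $\Gamma_n$ so that $y_1-y_2$ is invertible), and reduces block by block to a diagonal form $\mathrm{diag}(I,(x_1^{-1}-1)I_{m''-k-1})$, reading off the exact value $\delta_n(C)=m''-k-1$ with $k$ the $\mathcal{K}_n''$-rank of the $C''$-block. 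You instead exploit the module structure: the identification $\Gamma_n\cong\mathbb{Z}\times\Gamma_n''$ with the splitting variable $t$ \emph{central} in $R_n\cong\mathcal{K}_n''[t^{\pm1}]$, the Smith form of the $C'$-block over the central subring $\mathbb{Q}[t^{\pm1}]$ (legitimate because $\delta_0(C')=0$ forces the rational Alexander module of $C'$ to vanish, and because all $a_i$ have the same image $t$ in $\Gamma_n$), the fundamental identity of Fox calculus (used twice: once to see that the surviving $a$-row is the sum $(1,\dots,1)$ of the old rows, since that vector spans the left kernel of $P'$, and once to kill the $R''$-relations after localization), localization at the central element $(t-1)$ for finiteness, and specialization at $t=1$ to count the non-unit invariant factors. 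I checked the points you flag as the "main obstacles" and they all go through: $t$ is central because $G\cong G'\times G''$ makes $\Gamma_n$ a direct product; $\bar\Gamma_n\cong\Gamma_n''$ via $\bar g\mapsto(-\psi''(\bar g),\bar g)$ so $\mathbb{K}_n\cong\mathcal{K}_n''$; the $t=1$ specialization of $P''$ is exactly the matrix to which Lemma~\ref{lem:Fox} applies for $C''$; and each $1-\beta_j^{-1}$ is nonzero at $t=1$ because $b_ja_1^{-1}$ is never trivial in $\Gamma_n$ (its $\Gamma_n''$-component has nonzero linking number), which is what gives the rank-$(1+r'')$ block decomposition. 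Your approach buys a uniform treatment with no case analysis and no need for the $y_1\neq y_2$ condition, at the cost of only establishing the equivalence $\delta_n(C)=0\Leftrightarrow\delta_n(C'')<\infty$ rather than the sharper equality $\delta_n(C)=m''-k-1$ that the paper's elimination produces (and which it reuses in Lemma~\ref{thmTrans} and Example~\ref{exmcubiclines}); your rank-drop argument only bounds $\delta_n(C)$ from below by $m''-1-r''$ without further work on the sizes of the $(t-1)$-primary invariant factors.
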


\begin{proof}
By Theorem~\ref{thmos}, $G\cong G'\times G''$. We first consider the case where $C''$ is also an irreducible curve such that $\delta_0(C'')=0$. In this situation, we know that $\delta_n(C'')=0$ for all $n\geq 0$, and, in fact, the stronger statement $(G'')_r^{(1)}=(G'')_r^{(2)}$ holds (Proposition~\ref{propIrr}). Since $G$ is the direct product of $G'$ and $G''$, we have that
$$
G_r^{(n+1)}/G_r^{(n+2)}\cong (G')_r^{(n+1)}/(G')_r^{(n+2)}\times(G'')_r^{(n+1)}/(G'')_r^{(n+2)},
$$
which is the trivial group for all $n\geq 0$. By equation~\eqref{eq3}, one gets that $\delta_n(C)=0$ for all $n\geq 0$.

From now on, we assume that $C''$ is either not irreducible, or if it is irreducible, then $\delta_0(C'')\neq 0$. 

We consider the presentation of $G$ described in Remark~\ref{rempresentation}. Since $(G')_r^{(1)}=(G')_r^{(2)}$ 
(Proposition~\ref{propIrr}), $a_ia_k^{-1}=1$ in $\mathbb{Z}\Gamma_n$ for all $i,k\in \{1,\ldots, m'\}$, $n\geq 0$.

From now on $n$ is some integer such that $n\geq 1$ if $C''$ is an irreducible curve with $\delta_0(C'')\neq 0$, 
and $n\geq 0$ if $C''$ is not irreducible. Note that, if $C''$ is irreducible, the result for $n=0$ is already 
proved in Lemma~\ref{prop0}.

Let $x_1=a_1$, and $x_i=a_ia_1^{-1}$ for all $i=2,\ldots, m'$. Let $y_j=b_ja_1^{-1}$ for all $j=1,\ldots,m''$. 
We obtain the presentation
$$
G= \left\langle 
x_1,\ldots, x_{m'},y_1,\ldots,y_{m''} \mid
\array{l}
[x_1,y_j],\ x_ix_1y_jx_i^{-1}x_1^{-1}y_j^{-1},\ \widetilde{R}',\ \widetilde{R}''\\
i=2,\ldots, m',\ j=1,\ldots, m''\\
\endarray
\right\rangle
$$
where $\widetilde{R}'$ are some relations in $x_1, \ldots, x_{m'}$, and $\widetilde{R}''$ are the same relations as $R''$ if we switch the letter $b_j$ for $y_j$, for all $j=1,\ldots, m''$. Indeed, if we plug in $y_jx_1$ for $b_j$ in the relations $R''$, the $x_1$'s cancel out because they commute with all the $y_j$'s and because the linking number homomorphism takes any word in the $b$ letters to the sum of the exponents appearing on that word, so the sum of the exponents of words in $R''$ must be zero.

We may assume by reordering that $y_1\neq y_2$ in $\Gamma_n$, where $n\geq 1$ if $C''$ is irreducible, and $n\geq 0$ otherwise. Let us see this. Indeed, if $C''$ is not irreducible, this amounts to $b_1$ and $b_2$ being positively oriented meridians around different irreducible components of $C''$, which we can assume after reordering. If $C''$ is irreducible but $\delta_0(C'')\neq 0$, Proposition~\ref{propIrr} says that $(G'')_r^{(2)}\subsetneqq (G'')_r^{(1)}$, which implies that there exist $j\neq l$ in $\{1,\ldots, m''\}$ such that $b_jb_l^{-1}\neq 1$ in $\Gamma''_1$. Reordering, we may assume that $j=1$ and $l=2$, and we get that $y_1\neq y_2$ in $\Gamma_n$.

Consider the involution of the matrix of Fox derivatives for this presentation of $G$ with coefficients in $\mathbb{Z}\Gamma_n$ ($B(n)$ in the notation of Section~\ref{remFox}),
\small
$$
\left(\begin{array}{c|c|c|c|c|c}
\text{\----}\ (1-y_j^{-1})\ \text{\----}& \text{\----}\ (x_2^{-1}-y_j^{-1})\ \text{\----}&\cdots &\text{\----}\  (x_{m'}^{-1}-y_j^{-1})\ \text{\----}& \ & \ \\
\text{\----}\ 0\ \text{\----} &\text{\----}\ (1-x_1^{-1}y_j^{-1})\ \text{\----}&\cdots &\text{\----}\  0\ \text{\----}& \ & \ \\
\text{\----}\ 0\ \text{\----} &\text{\----}\ 0\ \text{\----} &\cdots & \text{\----}\ 0\ \text{\----}  & A' & 0 \\
\vdots & \vdots  & \vdots & \vdots  & \ & \ \\
\text{\----}\ 0\ \text{\----} &\text{\----}\ 0\ \text{\----} &\cdots &\text{\----}\  (1-x_1^{-1}y_j^{-1})\ \text{\----} & \ & \ \\
\hline
\ & \ & \ & \ & \ & \ \\
(x_1^{-1}-1)I_{m''}& (x_1^{-1}x_2^{-1}-1)I_{m''}& \cdots & (x_1^{-1}x_{m'}^{-1}-1)I_{m''}& 0 & A''\\
\end{array}\right),
$$
\normalsize
where ``$\text{\----}\ z_j\ \text{\----}$'' denotes a row of $m''$ elements whose $j$-th entry is $z_j$ for $j=1,\ldots, m''$, and $I_{m''}$ is the identity matrix of dimension $m''\times m''$. $A'$ is the matrix corresponding to the relations $\tilde{R}'$, and $A''$ is the matrix that computes $\delta_n(C'')$ with coefficients in $\mathbb{Z}\Gamma_n''$, which is identified with $\mathbb{Z}\bar \Gamma_n$ by the isomorphism of groups
$h:\Gamma_n''\to \bar \Gamma_n$, given by $h(b_j)=y_j.$

First, note that $x_i=1$ in $\Gamma_n$, for all $i=2, \ldots, m'$.
%
In addition, the left part of this matrix consists on $m'$ blocks of dimensions $(m'+ m'')\times m''$. 
We subtract the $i$-th column to the $i$-th column of the $j$-th block, for all $i=1,\ldots, m''$, $j=2,\ldots, m'$, to get
\small
\begin{equation}
\label{eqnpreunit}
\left(\begin{array}{c|c|c|c|c|c}
\text{\----}\ (1-y_j^{-1})\ \text{\----}& \text{\----}\ 0\ \text{\----}&\cdots &\text{\----}\  0\ \text{\----}& \ & \ \\
\text{\----}\ 0\ \text{\----} &\text{\----}\ (1-x_1^{-1}y_j^{-1})\ \text{\----}&\cdots &\text{\----}\  0\ \text{\----}& \ & \ \\
\text{\----}\ 0\ \text{\----} &\text{\----}\ 0\ \text{\----} &\cdots & \text{\----}\ 0\ \text{\----}  & A' & 0 \\
\vdots & \vdots  & \vdots & \vdots  & \ & \ \\
\text{\----}\ 0\ \text{\----} &\text{\----}\ 0\ \text{\----} &\cdots &\text{\----}\  (1-x_1^{-1}y_j^{-1})\ \text{\----} & \ & \ \\
\hline
\ & \ & \ & \ & \ & \ \\
(x_1^{-1}-1)I_{m''}& 0& \cdots & 0 & 0 & A''\\
\end{array}\right).
\end{equation}
\normalsize

Note that $1-y_j^{-1}\neq 0$ in $\mathbb{Z}\bar\Gamma_n$ for any $j=1,\ldots, m''$, since $1-y_j^{-1}\neq 0$ in $\mathbb{Z}\bar\Gamma_0$. 
We multiply row $m'+1$ by $1-y_1^{-1}$ on the left, and add to it the first row times $1-x_1^{-1}$, and the ($m'+j$)-th row times $1-y_j^{-1}$ for all $j=2,\ldots, m''$, to get 

\footnotesize
$$
\left(\begin{array}{c|c|c|c|c|c|c}
\multicolumn{2}{c|}{\text{\----}\ (1-y_j^{-1})\ \text{\----}}& \text{\----}\ 0\ \text{\----}&\cdots &\text{\----}\ 0\ \text{\----}& \ & \ \\
\multicolumn{2}{c|}{\text{\----}\ 0\ \text{\----} }&\text{\----}\ (1-x_1^{-1}y_j^{-1})\ \text{\----}&\cdots &\text{\----}\ 0\ \text{\----}& \ & \ \\
\multicolumn{2}{c|}{\text{\----}\ 0\ \text{\----} }&\text{\----}\ 0\ \text{\----} &\cdots & \text{\----}\ 0\ \text{\----}  & A' & 0 \\
\multicolumn{2}{c|}{\vdots} & \vdots  & \vdots & \vdots  & \ & \ \\
\multicolumn{2}{c|}{\text{\----}\ 0\ \text{\----} }&\text{\----}\ 0\ \text{\----} &\cdots &\text{\----}\  (1-x_1^{-1}y_j^{-1})\ \text{\----} & \ & \ \\
\hline
\multicolumn{2}{c|}{\text{\----}\ 0\ \text{\----} }&\text{\----}\ 0\ \text{\----} &\cdots & \text{\----}\ 0\ \text{\----} & (1-x_1^{-1})a'_{1,*} & \ \\
\cline{1-6}
\vert & \ & \ & \ & \ & \ \\
0 &(x_1^{-1}-1)I_{m''}& 0& \cdots & 0& 0 & A''\\
\vert & \ & \ & \ & \ & \ \\
\end{array}\right),
$$
\normalsize
where $a'_{1,*}$ is the first row of $A'$, so its entries are polynomials in $\mathbb{Z}[x_1]$, which commute with elements of $\mathbb{K}_n$, 
which is identified by $h$ with $\mathcal{K}_n''$.

We now focus on the second to $m'$-th blocks of size $m'\times m''$ at the top of the matrix. We can multiply the $j$-th column (on the right) 
of each of these blocks by $y_j$ for all $j=1,\ldots, m''$, and subtract the second from the first column of each of these blocks to get 
$y_1-y_2$ as the first entry and $y_j-x_1^{-1}$  as the $j$-th entry of the $k$-th row of the $k$-th block, where $k=2,\ldots, m'$, $j=2,\ldots, m'$. 
Note that $y_1\neq y_2$ in $\mathbb{Z}\bar \Gamma_n$, so $y_1-y_2$ has an inverse in $R_n$. Now, we multiply the first column (on the right) 
by the inverse of $1-y_1^{-1}$, and the first column of the $j$-th block of size $m'\times m''$ by the inverse of $y_1-y_2$ for all $j=2,\ldots,m''$. 
Reordering the columns, putting the ones corresponding to the first column of every $m'\times m''$ block first, we get
\begin{equation}
\scriptscriptstyle
\left(\begin{array}{ccc|ccc|ccc|c|ccc}
\multicolumn{3}{c|}{I_{m'}} & \multicolumn{3}{c|}{0} & \multicolumn{3}{c|}{*}& A' & \multicolumn{3}{c}{*}\\
\hline
\multicolumn{3}{c|}{\ }&\multicolumn{3}{c|}{ \ }  & \multicolumn{3}{c|}{} & (1-x_1^{-1})a'_{1,*}  & \multicolumn{3}{c}{\ }\\
\cline{10-10}
\multicolumn{3}{c|}{0 }&\multicolumn{3}{c|}{ A''}  & \multicolumn{3}{c|}{ B} & 0  & \multicolumn{3}{c}{0}\\
\end{array}\right),
\label{eqnClose}
\end{equation}
where $B$ is the matrix
$$
\left(\begin{array}{c}
 \text{\----}\ 0\ \text{\----}\\
\hline
\ \\
(x_1^{-1}-1)I_{m''-1} \\
\ \\
\end{array}\right).
$$

Hence, performing column operations we can turn matrix~\eqref{eqnClose} into
\begin{equation}
\scriptscriptstyle
\left(\begin{array}{ccc|ccc|ccc|c|ccc}
\multicolumn{3}{c|}{I_{m'}} & \multicolumn{3}{c|}{0} & \multicolumn{3}{c|}{0}& 0 & \multicolumn{3}{c}{0}\\
\hline
\multicolumn{3}{c|}{\ }&\multicolumn{3}{c|}{ \ }  & \multicolumn{3}{c|}{} & (1-x_1^{-1})a'_{1,*}  & \multicolumn{3}{c}{\ }\\
\cline{10-10}
\multicolumn{3}{c|}{0 }&\multicolumn{3}{c|}{ A''}  & \multicolumn{3}{c|}{ B} & 0  & \multicolumn{3}{c}{0}\\
\end{array}\right),
\label{eqnClose2}
\end{equation}

Let $k$ be the rank of the left $\mathcal{K}_n''$-module spanned by the rows of $A''$. By Proposition~\ref{prop:Fox}, 
$k$ is at most $m''-1$, and $k$ is equal to $m''-1$ if and only if $\delta_n(C'')<\infty$. Identifying 
$\mathcal{K}_n''$ with $\mathbb{K}_n$ by $h$, we get that the rank of the left $\mathbb{K}_n$-module spanned by the rows 
of $A''$ is $k$ as well. Hence, doing row and column operations in $\mathcal{K}_n''$, and noting that $x_1$ commutes 
with $\mathcal{K}_n''$ in $R_n$, we can turn the matrix~\eqref{eqnClose2} into
$$
\scriptscriptstyle
\left(\begin{array}{ccc|ccc|ccc|ccc|ccc}
\multicolumn{3}{c|}{I_{m'}} & \multicolumn{3}{c|}{0} & \multicolumn{3}{c|}{0}& \multicolumn{3}{c}{0 }& \multicolumn{3}{c}{0 }\\
\hline
\multicolumn{3}{c|}{0 }&\multicolumn{3}{c|}{I_{k}}& \multicolumn{3}{c|}{\ }  &\multicolumn{3}{c|}{\ }& \multicolumn{3}{c}{\  }\\
\cline{1-6}
\multicolumn{3}{c|}{0 }&\multicolumn{3}{c|}{0}& \multicolumn{3}{c|}{(x_1^{-1}-1)\widetilde B } & \multicolumn{3}{c|}{(x_1^{-1}-1)E }  & \multicolumn{3}{c}{0}\\
\end{array}\right),
$$
where $\widetilde B$ is an $m''\times m''$ matrix in $\mathcal{K}''_n$ such that the rank of the left $\mathcal{K}''_n$-module spanned by its rows is $m''-1$, and $E$ is a matrix with entries in $R_n$. In particular, the rank of the left $\mathcal{K}''_n$-module spanned by the last $m''-k$ rows of $\widetilde B$  is greater or equal than $m''-k-1$, and at most $m''-k$. Let us denote by $D$ and $F$ the matrices formed by the last $m''-k$ rows of $\widetilde B$ and $E$ respectively. Doing column operations, we get
\begin{equation}
\scriptscriptstyle
\left(\begin{array}{ccc|ccc|ccc|ccc|ccc}
\multicolumn{3}{c|}{I_{m'}} & \multicolumn{3}{c|}{0} & \multicolumn{3}{c|}{0}& \multicolumn{3}{c|}{0}& \multicolumn{3}{c}{0}\\
\hline
\multicolumn{3}{c|}{0 }&\multicolumn{3}{c|}{I_{k}}& \multicolumn{3}{c|}{0 }  &\multicolumn{3}{c|}{0}&\multicolumn{3}{c}{0}\\
\hline
\multicolumn{3}{c|}{0 }&\multicolumn{3}{c|}{0}& \multicolumn{3}{c|}{(x_1^{-1}-1)D } & \multicolumn{3}{c|}{(x_1^{-1}-1)F} & \multicolumn{3}{c}{0}\\
\end{array}\right).
\label{eqnE}
\end{equation}

By Lemma~\ref{lem:Fox}, the rank of the left $\mathcal{K}_n$-module generated by the rows of this matrix should be less 
or equal than $m'+m''-1$, which rules out the possibility of the rank of the left $\mathcal{K}_n''$-module spanned by 
the rows of $D$ being $m''-k$. Hence, the rank of the left $\mathcal{K}_n''$-module spanned by the rows of $D$ is 
$m''-k-1$. If we keep doing row and column operations to $D$ in $\mathcal{K}''_n$, and perhaps permuting some of 
the last $m''-k$ rows of matrix~\eqref{eqnE} at the end, one obtains
$$
\scriptscriptstyle
\left(\begin{array}{c|c|c|c}
I_{m'}& 0 & 0 &0\\
\hline
0 &I_{k}& 0 &0\\
\hline
0 &0& (x_1^{-1}-1)I_{m''-k-1} & \ \\
\cline{1-3}
\text{\----}\ 0\ \text{\----}& \text{\----}\ 0\ \text{\----}& \text{\----}\ 0\ \text{\----} &(1-x_1^{-1})* 
\end{array}\right),
$$
where $*$ is a matrix in $R_n$. But again, by rank considerations, the last row of this matrix must be identically $0$. 
Performing column operations, we can turn $(1-x_1^{-1})*$ into the zero matrix. Hence, $\delta_n(C)=m''-k-1$, which 
is a finite number. This means that $\delta_n(C)=0$ if and only if $\delta_n(C'')$ is finite.
\end{proof}

\begin{lemma}\label{thmTrans}
Let $C=C'\cup C''\subset \mathbb{C}^2$ be the union of two affine plane curves, with $\deg C'=m'$ and $\deg C''=m''$. 
Suppose that $C'\cap C''$ consists on $m'm''$ distinct points in $\mathbb{C}^2$. Suppose that neither $C'$ nor $C''$ are irreducible with $\delta_0=0$.
Then $$\delta_n(C)=0 \text{ for all } n\geq 1.$$
Moreover, if either both $C'$ and $C''$ are not irreducible, or both irreducible, the equality holds for all $n\geq 0$. 
However, if one of the curves, say $C'$, is irreducible and the other one ($C''$) is not, then $\delta_0(C)$ is finite, and
$$
\delta_0(C)=0\Leftrightarrow \delta_0(C'')<\infty.
$$
\end{lemma}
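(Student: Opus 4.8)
The plan is to bypass the explicit Fox-matrix reduction used for Lemma~\ref{thmTrivialAP} and to argue directly from the module-theoretic formula~\eqref{eq3}. By Theorem~\ref{thmos}, $G\cong G'\times G''$, and the rational derived series of a direct product splits factorwise, $G_r^{(k)}\cong (G')_r^{(k)}\times (G'')_r^{(k)}$ for all $k$: this follows by induction from Definition~\ref{hodg}, since the condition $g^N\in[G_r^{(k)},G_r^{(k)}]$ can be tested coordinatewise. Writing $M':=(G')_r^{(n+1)}/(G')_r^{(n+2)}$ and $M'':=(G'')_r^{(n+1)}/(G'')_r^{(n+2)}$, we obtain a splitting $G_r^{(n+1)}/G_r^{(n+2)}\cong M'\oplus M''$ of $\mathbb{Z}\bar\Gamma_n$-modules on which $\Gamma_n=\Gamma_n'\times\Gamma_n''$ acts by conjugation; since the two factors commute, $\Gamma_n''$ acts trivially on $M'$ and $\Gamma_n'$ acts trivially on $M''$. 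As $\rk_{\mathbb{K}_n}$ is additive on direct sums, \eqref{eq3} gives
\[
\delta_n(C)=\rk_{\mathbb{K}_n}\!\big(M'\otimes_{\mathbb{Z}\bar\Gamma_n}\mathbb{K}_n\big)+\rk_{\mathbb{K}_n}\!\big(M''\otimes_{\mathbb{Z}\bar\Gamma_n}\mathbb{K}_n\big).
\]

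The engine of the proof is a cheap vanishing criterion: if some $g\neq 1$ in $\bar\Gamma_n$ acts trivially on a right module $M$, then $m\cdot(g-1)=0$ for all $m$, and since $\mathbb{Z}\bar\Gamma_n$ is a domain the nonzero element $g-1$ becomes invertible in $\mathbb{K}_n$; hence $m\otimes x=m\cdot(g-1)\otimes(g-1)^{-1}x=0$ and $M\otimes_{\mathbb{Z}\bar\Gamma_n}\mathbb{K}_n=0$. Now $\{1\}\times\bar\Gamma_n''$ lies in $\bar\Gamma_n=\ker\bar\psi$ and acts trivially on $M'$, and symmetrically $\bar\Gamma_n'\times\{1\}$ acts trivially on $M''$; so the first rank vanishes once $\bar\Gamma_n''\neq 1$, and the second once $\bar\Gamma_n'\neq 1$. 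I would then observe that $\bar\Gamma_n''=1$ is equivalent to $\Gamma_n''\cong\mathbb{Z}$, which by Proposition~\ref{propIrr} happens exactly when $C''$ is irreducible and either $n=0$ or $\delta_0(C'')=0$, and likewise for $C'$. Under the hypothesis that neither curve is irreducible with $\delta_0=0$, this forces $\bar\Gamma_n',\bar\Gamma_n''\neq 1$ for all $n\geq 1$, and also at $n=0$ for whichever curve is reducible.

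Combining the two displays, for every $n\geq 1$ both summands vanish and $\delta_n(C)=0$. At $n=0$, if both curves are reducible the same reasoning gives $\delta_0(C)=0$, while the both-irreducible case is covered by Lemma~\ref{prop0}. The only remaining configuration, and the one I expect to be the main obstacle, is the asymmetric $n=0$ case with $C'$ irreducible and $C''$ reducible: here $\bar\Gamma_0'=1$, so the criterion only kills the $M'$-summand and $\delta_0(C)=\rk_{\mathbb{K}_0}(M''\otimes_{\mathbb{Z}\bar\Gamma_0}\mathbb{K}_0)$. The decisive point is that, because $\Gamma_0'=\mathbb{Z}$, projection to the second factor carries $\bar\Gamma_0=\ker\bar\psi$ isomorphically onto $\Gamma_0''$; under this isomorphism $\mathbb{K}_0=Q(\mathbb{Z}\Gamma_0'')$ and the $\bar\Gamma_0$-action on $M''$ becomes the full $\Gamma_0''$-action, whence
\[
\delta_0(C)=\rk_{Q(\mathbb{Z}\Gamma_0'')}\!\big(M''\otimes_{\mathbb{Z}\Gamma_0''}Q(\mathbb{Z}\Gamma_0'')\big).
\]

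To extract the two claims of the asymmetric case I would localize in stages. Writing $\Gamma_0''\cong\bar\Gamma_0''\times\mathbb{Z}$ gives $\mathbb{Z}\Gamma_0''\cong\mathbb{Z}\bar\Gamma_0''[t^{\pm1}]$, and inverting $\mathbb{Z}\bar\Gamma_0''\setminus\{0\}$ turns this into the PID $\mathbb{K}_0''[t^{\pm1}]$, with $Q(\mathbb{Z}\Gamma_0'')=Q(\mathbb{K}_0''[t^{\pm1}])$. Hence the displayed quantity is the free rank of $\mathcal{A}:=M''\otimes_{\mathbb{Z}\Gamma_0''}\mathbb{K}_0''[t^{\pm1}]$ over $\mathbb{K}_0''[t^{\pm1}]$, which is finite, giving $\delta_0(C)<\infty$. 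A base-change identification $\mathcal{A}\cong M''\otimes_{\mathbb{Z}\bar\Gamma_0''}\mathbb{K}_0''$ of $\mathbb{K}_0''$-modules shows $\rk_{\mathbb{K}_0''}\mathcal{A}=\delta_0(C'')$ via \eqref{eq3}; and over the PID $\mathbb{K}_0''[t^{\pm1}]$ a finitely generated module has finite $\mathbb{K}_0''$-rank precisely when it is torsion, i.e. when its free rank is $0$. Therefore the free rank of $\mathcal{A}$ vanishes if and only if $\delta_0(C'')<\infty$, so $\delta_0(C)=0\Leftrightarrow\delta_0(C'')<\infty$, completing the proof. The only nonformal ingredients are the two change-of-rings identifications in this last case; everything else is bookkeeping of trivial group actions.
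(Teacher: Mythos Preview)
Your argument is correct and takes a genuinely different route from the paper. The paper carries out explicit Fox-calculus matrix reductions: it writes down the presentation matrix $B(n)$ coming from the presentation in Remark~\ref{rempresentation}, performs a long sequence of row and column operations over $R_n$, and reads off $\delta_n(C)$ from the resulting diagonal form via Proposition~\ref{prop:Fox}. You instead work directly with the module-theoretic formula~\eqref{eq3}, exploiting two clean observations: (i) the rational derived series of a direct product splits factorwise, so $\Gamma_n\cong\Gamma_n'\times\Gamma_n''$ and $G_r^{(n+1)}/G_r^{(n+2)}\cong M'\oplus M''$ as $\mathbb{Z}\bar\Gamma_n$-modules; and (ii) if a nontrivial element of $\bar\Gamma_n$ acts trivially on a module $M$, then $M\otimes_{\mathbb{Z}\bar\Gamma_n}\mathbb{K}_n=0$ because $g-1$ becomes a unit in the skew field $\mathbb{K}_n$. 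Since $\{1\}\times\bar\Gamma_n''$ acts trivially on $M'$ and $\bar\Gamma_n'\times\{1\}$ on $M''$, the vanishing in all symmetric cases follows immediately from knowing when $\bar\Gamma_n',\bar\Gamma_n''$ are nontrivial. For the asymmetric $n=0$ case you identify $\bar\Gamma_0\cong\Gamma_0''$ via projection (since $\Gamma_0'=\mathbb{Z}$), and the two-stage localization $\mathbb{Z}\Gamma_0''\to\mathbb{K}_0''[t^{\pm1}]\to\mathcal{K}_0''$ lets you compare the free rank of $\mathcal{A}$ over the PID $\mathbb{K}_0''[t^{\pm1}]$ with its $\mathbb{K}_0''$-rank $\delta_0(C'')$.

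What each approach buys: your argument is shorter, conceptually transparent, and works uniformly without tracking matrix entries in a noncommutative ring; it also makes clear that the mechanism is simply the triviality of the cross-action. The paper's computation, on the other hand, yields the exact value $\delta_0(C)=m''-k-1$ in the asymmetric case (where $k$ is the row rank of the Fox matrix for $C''$), which is used in Example~\ref{exmcubiclines}, and produces the explicit presentation matrices~\eqref{eqnpreunit} and~\eqref{eqn1} that are later invoked in Remark~\ref{remconstant} and the proof of Corollary~\ref{corAP}. One small point you leave implicit: the finiteness of the free rank of $\mathcal{A}$ over $\mathbb{K}_0''[t^{\pm1}]$ relies on $M''$ being finitely generated over $\mathbb{Z}\Gamma_0''$, which holds because $U''$ has the homotopy type of a finite CW complex; this deserves one sentence.
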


\begin{proof}
If both $C'$ and $C''$ are irreducible, the result for $n=0$ follows from Lemma~\ref{prop0}.

We consider the presentation of $G$ described in Remark~\ref{rempresentation}. 
If $C'$ is not irreducible, we can assume that $a_1\neq a_2$ in $\Gamma_0$ by reordering, so $a_2a_1^{-1}\neq 1$ 
in $\mathbb{Z}\bar\Gamma_n$ for any $n\geq 0$. Similarly, if $C''$ is not irreducible, we can assume that 
$b_2b_1^{-1}\neq 1$ in $\mathbb{Z}\bar\Gamma_n$ for any $n\geq 0$. After reordering, we may assume the same 
condition if $C''$ is irreducible with $\delta_0(C'')\neq 0$ (resp. $C'$), but this time for $n\geq 1$, as 
justified in the proof of Lemma~\ref{thmTrivialAP}.


We deal with the case when $n$ is some integer greater or equal than $1$ if either $C'$ or $C''$ are irreducible, 
and $\geq 0$ otherwise.

Let $x_1=a_1$, and $x_i=a_ia_1^{-1}$ for all $i=2,\ldots, m'$. Let $y_1=b_1$, and $y_j=b_jb_1^{-1}$ for all $j=2,\ldots,m''$. 
We obtain the presentation
$$
G=\langle x_1,\ldots, x_{m'},y_1,\ldots,y_{m''} \vert [x_i,y_j]\text{ for all } i=1,\ldots,m'\text{ and}\text{\ } j=1,\ldots, m''; \widetilde{R}'; \widetilde{R}''\rangle
$$
where $\widetilde{R}'$ (resp. $\widetilde{R}''$) are the defining relations for $\pi_1(\mathbb{C}^2\setminus C')$ 
(resp. $\pi_1(\mathbb{C}^2\setminus C'')$) in $x_1, \ldots, x_{m'}$ (resp. $y_1,\ldots,y_{m''}$).

Consider the matrix $B(n)$ described in Section~\ref{remFox}, that is,
\small
\begin{equation}
\left(\begin{array}{c|c|c|c|c}
\text{\----}\ (1-y_j^{-1})\ \text{\----} & \text{\----}\ 0\ \text{\----} &\cdots & \text{\----}\ 0\ \text{\----} &\ \\
\text{\----}\ 0\ \text{\----} &\text{\----}\ (1-y_j^{-1})\ \text{\----} &\cdots & \text{\----}\ 0\ \text{\----}&\ \\
\vdots & \vdots  &\cdots & \vdots  & *\\
\text{\----}\ 0\ \text{\----} & \text{\----}\ 0\ \text{\----} &\cdots &\text{\----}\ (1-y_j^{-1})\ \text{\----} &\ \\
\hline
(x_1^{-1}-1)I_{m''} & (x_2^{-1}-1)I_{m''} &\cdots & (x_{m'}^{-1}-1)I_{m''}&*\\
\end{array}\right),
\label{eqn1}
\end{equation}
\normalsize
where the rightmost columns correspond to $\widetilde{R}'$ and $\widetilde{R}''$.

Note that $x_2^{-1}-1$ is non-zero in $\mathbb{Z}\bar\Gamma_n$ if $n\geq 1$, and, if $C'$ is not irreducible, 
also for $n=0$. We begin by multiplying the last $m''$ rows by the inverse of $x_2^{-1}-1$ (on the left), and then, 
by performing column operations, one obtains
\small
$$
\left(\begin{array}{c|c|c|c|c}
\text{\----}\ (1-y_j^{-1})\ \text{\----} & \text{\----}\ 0\ \text{\----} &\cdots & \text{\----}\ 0\ \text{\----} &\ \\
\text{\----}\ *\ \text{\----} &\text{\----}\ (1-y_j^{-1})\ \text{\----} &\cdots & \text{\----}\ *\ \text{\----}&\ \\
\vdots & \vdots  &\cdots & \vdots  & *\\
\text{\----}\ 0\ \text{\----} & \text{\----}\ 0\ \text{\----} &\cdots &\text{\----}\ (1-y_j^{-1})\ \text{\----} &\ \\
\hline
0& I_{m''} &\cdots & 0&0\\
\end{array}\right).$$
\normalsize
Hence, we may compute $\delta_n(C)$ using the matrix formed by the first $m'$ rows of the matrix above 
without the columns of the second block. This new matrix consists on $m'-1$ blocks of $m'\times m''$ 
matrices, plus another matrix at the end, represented by the rightmost submatrix after the last vertical 
line. One can permute the first and second rows in this new matrix of $m'$ rows, 
and then permute columns so that the first $m'$ columns of the resulting matrix are the second columns 
of each of the first $(m'-1)$ blocks of size $m'\times m''$. This way one obtains a matrix of the form,
$$
\left(\begin{array}{c|c}
*  & * \\
\hline
(1-y_2^{-1})I_{m'-1}& *\\
\end{array}\right).
$$
Note that $1-y_2^{-1}$ is non-zero in $\mathbb{Z}\bar \Gamma_n$ for $n\geq 1$, and, if $C''$ is not 
irreducible, also in $\mathbb{Z}\bar \Gamma_0$. Finally, multiplying each row on the left by the inverse 
of $1-y_2^{-1}$, and performing column and row operations, we see that $\delta_n(C)=0$.

Lastly, we consider the case when $n=0$, $C'$ is irreducible, and $C''$ is not irreducible. The proof of 
this is done by considering the same presentation for $G$ as the one explained in the proof of 
Lemma~\ref{thmTrivialAP}, and following the same computations done there, the only difference being that $n=0$ 
in this case and that, since $C''$ is not irreducible, we know that $y_1\neq y_2$ in $\mathbb{Z}\bar \Gamma_0$. 
Note that $x_i=1$ in $\mathbb{Z}\Gamma_0$ for $i=2,\ldots, m'$ because $C'$ is irreducible. Using the same 
notation as in the proof of Lemma~\ref{thmTrivialAP}, it follows that $\delta_0(C)=m''-k-1$ (a finite number), 
where $k$ is the rank of the left $\mathcal{K}_n''$-module spanned by the rows of $A''$, and $\delta_0(C'')$ 
is finite if and only if $k=m''-1$. This means that $\delta_0(C)=0$ if and only if $\delta_0(C'')$ is finite.
\end{proof}

\begin{example}\label{exmcubiclines}
Let $C'$ be an irreducible curve such that $\delta_0(C')\neq 0$. For example, we can take $C'$ to be the cuspidal 
cubic, which has $\delta_0(C')=2$, and $\delta_n(C')=1$ for $n\geq 1$ (\cite[Example 9.8]{Suky}). Let $C''$ be a 
collection of $m''$ parallel lines, each of which intersects $C'$ in three distinct points. Let $C=C'\cup C''$. 
Then, following the proof and notations of Theorem~\ref{thmTrans}, it follows that
$$
\left\{\begin{array}{lc}
\delta_0(C)=m''-1 & \ \\
\delta_n(C)=0 & \text{for all }n\geq 1.\\
\end{array}\right.
$$

Indeed, in this case the fundamental group of the complement to $m''$ parallel lines is the free group on $m''$ 
generators, and thus it has a presentation with no relations. Hence $A''$ is the empty matrix, so the $k$ that 
appears at the end of the proof of Theorem~\ref{thmTrans} is $0$.

This example shows that $\delta_0$ and $\delta_n$ can differ by arbitrarily large numbers, for $n\geq 1$. 
This cannot happen in the case of knots~(\cite{Cochran}), where $\delta_0\leq \delta_1+1\leq \delta_2 +1 \leq \ldots$.
\end{example}

\section{Restatement of the main theorem in terms of multivariable Alexander polynomials}
We start by recalling the relationship between the Alexander polynomials of a plane curve $C$ and $\delta_0(C)$. If $C$ is irreducible, 
the result below appears in \cite[Remark 3.9]{MaxLeidy}, and the non-irreducible case was done in~\cite[Theorem 11]{EvaMax}.

\begin{theorem}\label{thmAAP}
Let $C\subset \mathbb{C}^2$ be a plane curve with $s$ irreducible components. Then,
$$\delta_0(C)=\deg \Delta_C^{\multi}(t_1,\ldots,t_s)$$
\end{theorem}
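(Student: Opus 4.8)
The plan is to compare the relative Alexander module of $U=\mathbb{C}^2\setminus C$ over $\Lambda:=\mathbb{Z}\Gamma_0=\mathbb{Z}[t_1^{\pm 1},\ldots,t_s^{\pm 1}]$, from which $\Delta_C^{\multi}$ is extracted, with the absolute localized module $\mathcal{A}_0(C)=H_1(U;R_0)$ over the PID $R_0\cong\mathbb{K}_0[t^{\pm 1}]$, whose $\mathbb{K}_0$-rank is $\delta_0(C)$. Recall that $\Gamma_0=G/G'=\mathbb{Z}^s$, that $\bar\Gamma_0=\ker\bar\psi\cong\mathbb{Z}^{s-1}$ is the kernel of the total-degree map, and that $R_0=\Lambda S_0^{-1}$ is the localization inverting the nonzero elements of $\mathbb{Z}\bar\Gamma_0$. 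Since $\Gamma_0$ is abelian, $\Lambda$ is a commutative UFD and $R_0$ is one of its localizations.

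First I would produce a short exact sequence relating absolute and relative homology. The long exact sequence of the pair $(U,u_0)$ with $\mathcal{L}^{\ab}$-coefficients, together with $H_1(u_0;\Lambda)=0$ and the identification of the map $H_0(u_0;\Lambda)\to H_0(U;\Lambda)$ with the augmentation $\Lambda\to\mathbb{Z}$, yields $0\to H_1(U;\Lambda)\to H_1(U,u_0;\Lambda)\to\mathfrak{a}\to 0$, where $\mathfrak{a}=(t_1-1,\ldots,t_s-1)$ is the augmentation ideal. Tensoring the defining sequence $0\to\mathfrak{a}\to\Lambda\to\mathbb{Z}\to 0$ with the flat $\Lambda$-algebra $R_0$ and computing $\mathbb{Z}\otimes_\Lambda R_0=R_0/\mathfrak{a}R_0$ shows that $\mathfrak{a}\otimes_\Lambda R_0$ is free of rank $1$ over $R_0$: for $s\geq 2$ each element $t_it_j^{-1}-1\in\mathbb{Z}\bar\Gamma_0$ is invertible in $R_0$, so $\mathfrak{a}R_0=R_0$ and $\mathfrak{a}\otimes_\Lambda R_0\cong R_0$, while for $s=1$ the ideal $\mathfrak{a}=(t-1)$ is already principal and free of rank $1$. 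Base-changing the first sequence by the flat ring $R_0$ and using that $R_0$ is free therefore gives a split exact sequence, so that $H_1(U,u_0;R_0)\cong\mathcal{A}_0(C)\oplus R_0$.

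Next I would run the Fitting-ideal bookkeeping. By Definition~\ref{defnmulti}, $\Delta_C^{\multi}=\gcd F_1(H_1(U,u_0;\mathcal{L}^{\ab}))$ in $\Lambda$; since Fitting ideals commute with base change and $R_0$ is a localization of the UFD $\Lambda$, so that gcd's are preserved up to units, the image of $\Delta_C^{\multi}$ generates $F_1(H_1(U,u_0;R_0))$ in $R_0$. Using the splitting and the standard shift $F_1(M\oplus R_0)=F_0(M)$, this ideal equals $F_0(\mathcal{A}_0(C))$, whose generator in the PID $R_0$ is precisely the order of $\mathcal{A}_0(C)$; equivalently, after the row-and-column reduction of Proposition~\ref{prop:Fox} it is the product of the nonzero diagonal entries of $D$. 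Thus the image of $\Delta_C^{\multi}$ in $R_0\cong\mathbb{K}_0[t^{\pm 1}]$ agrees, up to a unit, with the order of $\mathcal{A}_0(C)$. Finally, over the PID $R_0$ one has $\delta_0(C)=\rk_{\mathbb{K}_0}\mathcal{A}_0(C)=\deg_t(\text{order of }\mathcal{A}_0(C))$, and under the splitting a monomial $t_1^{a_1}\cdots t_s^{a_s}$ maps to a $\mathbb{K}_0$-unit times $t^{a_1+\cdots+a_s}$; hence this $t$-degree is exactly the span of total degrees of $\Delta_C^{\multi}$, which is the meaning of $\deg\Delta_C^{\multi}$. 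The degenerate case $\delta_0(C)=\infty$ corresponds to $\mathcal{A}_0(C)$ having a free $R_0$-part, i.e. $F_0(\mathcal{A}_0(C))=0$ and $\Delta_C^{\multi}=0$, consistent with the convention $\deg 0=\infty$.

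The main obstacle is the middle step: identifying $\mathfrak{a}\otimes_\Lambda R_0$ as a free rank-$1$ module, which is what forces the relative module to split off exactly one free $R_0$-summand and thereby converts the first Fitting ideal downstairs into the zeroth (order) ideal upstairs. The sub-cases $s=1$ and $s\geq 2$ behave differently at the level of $\mathfrak{a}$ itself (principal versus non-principal), and the uniform conclusion only emerges after localization; securing this, and then matching the paper's notion of $\deg$ with the $t$-degree over $R_0$ via the splitting (noting that any factor of $\Delta_C^{\multi}$ lying in $\mathbb{Z}\bar\Gamma_0$ has total-degree span $0$ and so does not affect $\deg_t$), are the two points requiring care. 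The remaining ingredients, namely flatness of $R_0$, base change and the free-summand shift for Fitting ideals, and gcd-preservation under localization of a UFD, are standard.
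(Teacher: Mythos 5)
Your argument is correct, but be aware that the paper does not actually prove Theorem~\ref{thmAAP}: it is quoted from \cite{MaxLeidy} (Remark 3.9, for $C$ irreducible) and \cite{EvaMax} (Theorem 11, for the general case), and the only argument supplied in the text is the remark extending the statement to $\delta_0(C)=\infty$ via $F_1(H_1(U,u_0;R_0))=0\Leftrightarrow F_1(H_1(U,u_0;\mathbb{Z}\Gamma_0))=0\Leftrightarrow \Delta_C^{\multi}=0$. Your proof reconstructs the missing content and does so soundly: the sequence $0\to H_1(U;\mathbb{Z}\Gamma_0)\to H_1(U,u_0;\mathbb{Z}\Gamma_0)\to\mathfrak{a}\to 0$, the fact that $\mathfrak{a}\otimes R_0$ is free of rank one (which is exactly what converts $F_1$ of the relative module into the order ideal $F_0(\mathcal{A}_0(C))$ after localization), and the preservation of Fitting ideals and gcd's under the flat localization $\mathbb{Z}\Gamma_0\to R_0$ are all correct, and your treatment makes the $\Delta_C^{\multi}=0$ case fall out uniformly rather than requiring the paper's separate remark. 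Two points deserve to be made explicit rather than asserted: first, that no cancellation occurs among monomials of equal total degree under the substitution $t_i\mapsto (t_it_1^{-1})\,t_1$ — this holds because for fixed total degree $d$ the elements $t_1^{-d}\prod_i t_i^{\alpha_i}$ are distinct elements of $\bar\Gamma_0$, hence $\mathbb{Z}$-linearly independent in $\mathbb{Z}\bar\Gamma_0$, so the $\mathbb{K}_0$-coefficient of $t^d$ in the image is nonzero whenever some monomial of total degree $d$ occurs; and second, that the paper never defines $\deg$ of a multivariable Laurent polynomial, so your reading (difference of maximal and minimal total degrees, the only reading invariant under the unit ambiguity in $\Delta_C^{\multi}$) should be stated as the convention under which the identity holds. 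Neither is a gap; with those two sentences added, this is a complete proof.
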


\begin{remark}
We are using the convention $\deg 0=\infty$. The proof of \cite[Theorem 11]{EvaMax} assumes $\delta_0(C)$ is finite, 
but the result is also true for $\delta_0(C)=\infty$ because
$$
\delta_0(C)=\infty \Leftrightarrow F_1\left(H_1(U,u_0;R_0)\right)=0 \Leftrightarrow F_1\left(H_1(U,u_0;\mathbb{Z}\Gamma_0)\right)=0\Leftrightarrow \Delta_C^{\multi}=0.
$$
In this list of equivalences, we have used that the projection $G\twoheadrightarrow \Gamma_0$ is the abelianization morphism and that $R_0$ is flat as a $\mathbb{Z}\Gamma_0$-module.
\end{remark}

\begin{remark}\label{remconstant}
Theorem~\ref{thmAAP} tells us that $\delta_0(C)=0$ if and only if $\Delta_C^{\multi}$ has a representative which 
is a non-zero homogeneous polynomial, which, under the hypotheses of Theorem~\ref{thmCharac}, implies that 
$\Delta_C^{\multi}$ is a non-zero constant. Indeed, $\Delta_C^{\multi}$ is the $\gcd$ of the codimension-one 
minors of $B(0)$, and the matrix $B(0)$ in equation~\eqref{eqn1} has a codimension-one minor which does not 
have non-constant homogeneous factors.
\end{remark}

Now, we characterize the plane curves with infinite $\delta_0(C)$.

\begin{lemma}\label{lempencil}
Let $C$ be a plane curve, with $s$ irreducible components $C_1,\ldots, C_s$. Then, the following are equivalent.
\begin{enumerate}
\item $\delta_0(C)=\infty$.
\item\label{pencil} $s\geq 2$ and $C_i$ is the zero set of a polynomial of the form $f(x,y)+\lambda_i$ for all 
$1\leq i \leq s$, where $f(x,y)\in\mathbb{C}[x,y]$ is a polynomial of degree $d\geq 1$, and $\lambda_i\in\mathbb{C}$ 
for all $1\leq i \leq s$.
\item There exists an epimorphism $G\twoheadrightarrow \mathbb{F}_s$ onto the free group of rank $s\geq 2$.
\end{enumerate}
\end{lemma}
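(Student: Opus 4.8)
The plan is to prove the cycle $(2)\Rightarrow(3)\Rightarrow(1)\Rightarrow(2)$, after recording a convenient reformulation of condition (1). By Theorem~\ref{thmAAP} and the remark following it, $\delta_0(C)=\infty$ if and only if $\Delta_C^{\multi}=0$, and by the Fitting-ideal description of the multivariable Alexander polynomial this is equivalent to the classical Alexander module $\mathcal{A}(G):=G'/G''$ having positive rank over the fraction field $\mathbb{K}:=Q(\mathbb{Z}[t_1^{\pm1},\ldots,t_s^{\pm1}])$ of the group ring of $H_1(U)=\mathbb{Z}^s$. This is the working form of (1) used below; note in passing that it forces $s\geq 2$, since for $s=1$ the curve is irreducible and $\Delta^{\multi}=\Delta^{\uni}\neq 0$. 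For the implication $(2)\Rightarrow(3)$: writing $C_i=\{f+\lambda_i=0\}$ with the $\lambda_i$ distinct and $\deg f=d\geq 1$, the polynomial $f$ defines a map $f\colon U\to\mathbb{C}\setminus\{-\lambda_1,\ldots,-\lambda_s\}$, since $f^{-1}(-\lambda_i)\cap U=\emptyset$. The base is homotopy equivalent to a wedge of $s$ circles, so its fundamental group is $\mathbb{F}_s$; a positively oriented meridian of $C_i$ is sent by $f_*$ to a loop encircling $-\lambda_i$ once (as $f+\lambda_i$ vanishes to order one along the smooth part of $C_i$), and these loops freely generate $\mathbb{F}_s$. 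Hence $f_*\colon G\twoheadrightarrow\mathbb{F}_s$ is onto, giving (3).

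For $(3)\Rightarrow(1)$, let $\rho\colon G\twoheadrightarrow\mathbb{F}_s$ be an epimorphism. It induces a surjection $H_1(U)=\mathbb{Z}^s\to\mathbb{Z}^s=H_1(\mathbb{F}_s)$, which is necessarily an isomorphism; identifying the two group rings along it, $\rho$ restricts to a $\mathbb{Z}[\mathbb{Z}^s]$-linear surjection $\mathcal{A}(G)=G'/G''\twoheadrightarrow\mathbb{F}_s'/\mathbb{F}_s''=\mathcal{A}(\mathbb{F}_s)$. Since localization is exact, $\rk_{\mathbb{K}}\mathcal{A}(G)\geq\rk_{\mathbb{K}}\mathcal{A}(\mathbb{F}_s)$. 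A direct computation with the cellular chain complex of a wedge of $s$ circles identifies $\mathcal{A}(\mathbb{F}_s)$ with $\ker\bigl(\mathbb{Z}[\mathbb{Z}^s]^{\,s}\to\mathbb{Z}[\mathbb{Z}^s]\bigr)$, $e_i\mapsto t_i-1$, whose rank is $s-1\geq 1$. Therefore $\mathcal{A}(G)$ has positive rank, i.e. $\delta_0(C)=\infty$.

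The crux is $(1)\Rightarrow(2)$, which I would argue geometrically. Positive rank of $\mathcal{A}(G)$ over $\mathbb{K}$ means the generic rank-one local system has nonvanishing $H^1$, so the first characteristic variety $V_1(U)\subseteq(\mathbb{C}^*)^s$ is the whole torus; in particular it has an irreducible component of full dimension $s$ through the trivial character. By Arapura's structure theorem for characteristic varieties of smooth quasi-projective varieties, such a component is pulled back from an orbifold pencil $g\colon U\to S$ onto a curve with $\dim=b_1(S)$; full dimension forces $b_1(S)=s$, whence $S\cong\mathbb{C}\setminus\{s\text{ points}\}$ and $g^*\colon H^1(S)\to H^1(U)$ is an isomorphism. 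Pulling $g$ back to a polynomial pencil on $\mathbb{C}^2$ and matching the $s$ deleted points of $S$ with the $s$ components of $C$ identifies $g$ with a polynomial $f$ of degree $\geq 1$ and exhibits each $C_i$ as a fiber $\{f+\lambda_i=0\}$, which is exactly (2).

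I expect this last implication to be the main obstacle: converting the purely homological statement $\Delta_C^{\multi}=0$ into the existence of an honest polynomial pencil whose fibers are the components of $C$. This is where the deep input (Arapura's theorem, or equivalently the fibration theorem for quasi-projective fundamental groups surjecting onto $\mathbb{F}_s$) is unavoidable, and it also requires the bookkeeping needed to rule out orbifold multiple fibers and to guarantee that the base is precisely $\mathbb{C}$ minus $s$ points rather than a more complicated curve.
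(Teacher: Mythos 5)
Your proposal is correct in outline, and while the hard direction coincides with the paper's, your treatment of the epimorphism condition is genuinely different. The paper proves $(1)\Leftrightarrow(2)$ by passing through the intermediate condition $\mathcal{V}_1(U)=(\mathbb{C}^*)^s$ (established via the Fox matrix $B(0)$ and \cite[Proposition 3, Remark 12]{EvaMax}) and then invoking Dimca's pencil theorem \cite[Theorem 4.1]{dimca2010}; for $(3)$ it shows $(3)\Rightarrow\mathcal{V}_1(U)=(\mathbb{C}^*)^s$ by citing Libgober's lemma and then reuses $(1)\Leftrightarrow(2)$. You instead close the cycle with a direct, elementary $(3)\Rightarrow(1)$: a surjection $G\twoheadrightarrow\mathbb{F}_s$ induces an isomorphism on $H_1$ and hence a $\mathbb{Z}[\mathbb{Z}^s]$-linear surjection $G'/G''\twoheadrightarrow\mathbb{F}_s'/\mathbb{F}_s''$, and the target visibly has rank $s-1\geq 1$, forcing $\Delta_C^{\multi}=0$. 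This is a nice self-contained replacement for the appeal to \cite{Libgober-characteristic}, and it buys independence from characteristic-variety machinery for that implication. For $(1)\Rightarrow(2)$ you and the paper both lean on the same deep input (Arapura/Dimca), but note that the bookkeeping you defer --- ruling out multiple/orbifold fibers, pinning the base down to $\mathbb{C}$ minus $s$ points, and matching fibers bijectively with components --- is precisely what the paper's use of Dimca's statement packages: working with $D=\overline{C}\cup L_\infty$ in $\mathbb{P}^2$, the $s+1$ special fibers of a primitive pencil are matched with the $s+1$ components of $D$, which forces the pencil to have the form $\beta_1F+\beta_2z^d$ and hence, affinely, $f+\lambda_i$. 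Finally, in $(2)\Rightarrow(3)$ your claim that the meridian images ``freely generate'' $\mathbb{F}_s$ is too quick (they are only conjugates of the standard generators, and conjugates of generators need not generate a free group); the clean argument is that the irreducibility of the $s\geq 2$ fibers $C_i$ forces $f$ to be primitive, so $f$ has irreducible generic fiber and therefore induces a surjection on $\pi_1$ --- though the paper is equally terse at this point.
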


We will refer to condition~\eqref{pencil} as $C$ being of \emph{affine pencil type}.

\begin{proof}
Note that by Theorem~\ref{thmfinite}, $\delta_0(C)=\infty\Rightarrow s\geq 2$.

With the notation of Section~\ref{remFox}, $B(0)$ is a presentation matrix for $H_1(U, u_0;\mathbb{Z} \Gamma_0)$. By \cite[Proposition 3, Remark 12]{EvaMax}, we have the following result about the first homology jump loci \cite[Definition 9]{EvaMax}, which we'll use as an intermediate equivalent condition in our proof:
$$
\mathcal{V}_1(U)=(\mathbb{C}^*)^s\Leftrightarrow \text{ all the codimension }1\text{ minors of }B(0)\text{ are 0}.
$$
This last condition is equivalent to the rank of the left $\mathcal{K}_0$-module generated by the rows of 
$B(0)$ being strictly smaller than $m-1$, which by Remark~\ref{remFox} is equivalent to $\delta_0(C)=\infty$.

Let $\overline C$ be the projective completion of $C$, and let $D$ be the curve in $\mathbb{P}^2$ defined by 
$D=\bar C\cup L_\infty$, where $L_\infty$ is the line at infinity. By \cite[Theorem 4.1]{dimca2010}, the condition 
$\mathcal{V}_1(U)=(\mathbb{C}^*)^s$ (which can be reformulated in terms of cohomology jump loci 
by~\cite[p.50, (2.1)]{dimca}) is equivalent to the existence of a primitive pencil 
$\mathcal{C}_{[\alpha_1:\alpha_2]}=\alpha_1 P_1(x,y,z) +\alpha_2 P_2(x,y,z)$ of plane curves on $\mathbb{P}^2$  
having $s+1$ fibers (corresponding to $s+1$ different $[\alpha_1:\alpha_2]\in\mathbb{P}^1$) whose reduced support 
form a partition of the set of $s+1$ irreducible components of $D$. Hence, the reduced support of those $s+1$ 
fibers must be in one to one correspondence with the irreducible components of $D$, so we may write the pencil 
in the form $\beta_1 F(x,y,z)+\beta_2 z^d$, where $F(x,y,z)$ is a degree $d$ irreducible polynomial in 
$\mathbb{C}[x,y,z]$ and $[\beta_1:\beta_2]\in\mathbb{P}^1$. Restricting to the affine part (making $z=1$) yields (1)$\Leftrightarrow$(2).

For (2)$\Rightarrow$(3), we see that $f(x,y)$ induces the desired epimorphism in fundamental groups. 
By~\cite[Lemma 1.2.1]{Libgober-characteristic} (3) implies $\mathcal V_1(U)=(\mathbb C^*)^s$ and hence the argument 
above implies (2). 
\end{proof}


As a corollary of the lemma above, Theorem~\ref{thmAAP}, Remark~\ref{remconstant} and Theorem~\ref{thmCharac}, 
one obtains Corollary~\ref{corAP}, whose proof is below.

\begin{proof}[Proof of Corollary~\ref{corAP}]
The only thing left to show is the last statement. $\Delta_C^{\multi}$ can be computed with the matrix from
equation~\eqref{eqnpreunit}, as the operations we did to get from $B(0)$ to that matrix were all allowed in 
$\mathbb{Z}\Gamma_0\cong\mathbb{Z}[t^{\pm 1},t_1^{\pm 1}, \ldots, t_s^{\pm 1}]$. The abelianization morphism 
identifies $x_1$ with $t$ and $y_j$ with $\frac{t_j}{t}$ in equation~\eqref{eqnpreunit}, and with those identifications 
and up to multiplication by a unit in $\mathbb{Z}[t^{\pm 1},t_1^{\pm 1}, \ldots, t_s^{\pm 1}]$, $(t_j-1)^{m'-1}(1-t)^{m''}$ 
and $(t_j-t)(t_j-1)(1-t)^{m''-1}$ are $(m'+m''-1)$-minors of~\eqref{eqnpreunit} for all $j=1,\ldots, m''$. Hence, 
$\Delta_C^{\multi}$ divides the greatest common divisor of all these minors, so $\Delta_C^{\multi}$ divides 
$(t-1)^{m''-1}$. The equality can be achieved, \cite[Theorem 9.15, case (ii)]{suciu} gives an example where 
$\Delta_C^{\multi}(t,t_1,\ldots,t_s)=(t-1)^{m''-1}$.
\end{proof}

\begin{remark}\label{remcomputations}
The higher order degrees depend 
on the linking number homomorphism, which, if the curve is not irreducible, is not an invariant of the fundamental group of the curve complement. However, the multivariable Alexander polynomial only depends on the fundamental group (up to a change of basis in the variables). Thus, Corollary~\ref{corAP} gives us direct restrictions for which groups can be realized as fundamental groups of the complement of a union of transversal plane curves, and those restrictions can be computed from a presentation of the group.
\end{remark}

\begin{example}
As an example of a curve of affine pencil type one can consider the cuspidal cubic $f(x,y)=y^2-x^3$ and the curve 
$C''=\{(x,y)\in \mathbb C^2 \mid f(f-1)=0\}$ of degree $m''=6$. One can check that 
$$\pi_1(\mathbb C^2\setminus C'')=\langle \alpha_1,\alpha_2,\gamma : [\gamma,\alpha_2]=[\gamma^{\alpha_1},\alpha_2]=1,\gamma=\gamma^{\alpha_1}\gamma^{\alpha_1^{-1}}\rangle.$$
Here, $\alpha_1$ and $\gamma\alpha_1$ are positively oriented meridians about $f=0$, and $\alpha_2$ is a positively oriented meridian about $f=1$.

Let $r_n$ be the rank of the left $\mathcal{K}''_n$-module generated by the rows of the $3\times 3$ 
matrix $B(n)$ of Section~\ref{remFox} computed from the presentation of $\pi_1(\mathbb C^2\setminus C'')$ above. 
One has $\gamma^{-1}-1\in \mathbb{Z}\bar{\Gamma}_n''$ is $0$ if $n=0$ and a unit otherwise.  Using this, it is 
straightforward to check that $r_0=1$, and $r_n=2$ for all $n\geq 1$. By Lemma~\ref{lem:Fox}, $\delta_0(C'')=\infty$ and 
$\delta_n(C'')<\infty$ for all $n\geq 1$. In fact, using the methods of Section~\ref{remFox}, one can check 
that $\delta_n(C'')=0$ for all $n\geq 1$. Indeed,

\begin{align*}
B(n)=&
\left[
\array{ccc}
0 & -u\alpha_1 v & u\alpha_1+\alpha_1^{-1}u\alpha_1-u\\
-u & \alpha_1^{-1}u\alpha_1 & 0\\
v & -\alpha_1 v & \alpha_1^{-2}\gamma^{-1}\alpha_1-1+\alpha_1
\endarray
\right]
\cong
\left[
\array{ccc}
0 &\alpha_1 v & 1-w-\alpha_1 \\
-u & uw & 0 \\
0 & vw & \alpha_1^{-2}\gamma^{-1}\alpha_1-w 
\endarray
\right]\\
\cong &
\left[
\array{cc}
\alpha_1v & 1-w-\alpha_1\\
0 & w\alpha_1^{-1}w-w\alpha_1^{-1}+\alpha_1^{-2}\gamma^{-1}\alpha_1
\endarray
\right]
\cong 
\left[
\array{cc}
\alpha_1v & 1-w-\alpha_1\alpha_2^{-1}\\
0 & 0
\endarray
\right]\\
\end{align*}
where $u=(\gamma^{-1}-1)$, $v=(\alpha_2^{-1}-1)$, and $w=u^{-1}\alpha_1^{-1}u\alpha_1=[u^{-1},\alpha_1^{-1}]$. The first transformation is a result
of multiplying the first row by $-u^{-1}$ (on the left), and then adding row 1 and $vu^{-1}$ times row 2 to row 3. The second transformation results from eliminating column 1 and row 2
(since $u$ is a unit) and substracting $w\alpha_1^{-1}$ times row 1 from row 2, using that $v$ and $w$ commute. The resulting last row is identically $0$ because $\alpha_1v\neq 0$ and $r_n$ is at most $2$ by Lemma~\ref{lem:Fox}. 
One obtains the last matrix after substracting the first column from the second column. Finally, $1-w-\alpha_1\alpha_2^{-1}\in\mathbb{Z}\bar\Gamma_n''$ is a unit, as it is non-zero in $\mathbb{Z}\bar\Gamma_0''$, so $\delta_n(C'')=0$ for all $n\geq 1$.


Let $C'$ be an irreducible curve of degree $m'$ such that $C'$ and $C''$ intersect in $6m'$ distinct points, 
and let $C=C'\cup C''$. Using the statement above Lemma~\ref{lem:Fox} and the proofs of Lemmas~\ref{thmTrivialAP} 
and~\ref{thmTrans}, we get that $\delta_0(C)=\left(\text{row corank of a presentation}\right.$ 
$\left.\text{matrix of }H_1(U'',u_0'';\mathcal{K}_0'')\right)-1=(3-r_0)-1=1$, and, by Theorem~\ref{thmCharac}, 
$\delta_n(C)=0$ for all $n\geq 1$. By Corollary~\ref{corAP} and Theorem~\ref{thmAAP}, 
$\Delta_C^{\text{multi}}(t,t_1,t_2)=t-1$.
\end{example}

\bibliographystyle{amsplain}
\begin{bibdiv}
\begin{biblist}

\bib{arapura}{article}{
   author={Arapura, D.},
   title={Fundamental groups of smooth projective varieties},
   conference={
      title={Current topics in complex algebraic geometry},
      address={Berkeley, CA},
      date={1992/93},
   },
   book={
      series={Math. Sci. Res. Inst. Publ.},
      volume={28},
      publisher={Cambridge Univ. Press, Cambridge},
   },
   date={1995},
   pages={1--16},
}

\bib{Cochran}{article}{
   author={Cochran, T. D.},
   title={Noncommutative knot theory},
   journal={Algebr. Geom. Topol.},
   volume={4},
   date={2004},
   pages={347--398}
}
\bib{dimca2010}{article}{
   author={Dimca, A.},
   title={Pencils of plane curves and characteristic varieties},
   book={
      series={Progr. Math.},
      volume={283},
      publisher={Birkh\"{a}user Verlag, Basel},
   },
   date={2010},
   pages={59--82},
}
\bib{dimca}{book}{
   author={Dimca, A.},
   title={Sheaves in topology},
   series={Universitext},
   publisher={Springer-Verlag, Berlin},
   date={2004},
}

\bib{EvaMax}{article}{
   author={Elduque, E.},
   author={Maxim, L.},
   title={Higher Order Degrees of Affine Plane Curve Complements},
   journal={Indiana Journal of Mathematics},
   date={(to appear)},
}

\bib{Harvey}{article}{
   author={Harvey, S. L.},
   title={Higher-order polynomial invariants of 3-manifolds giving lower
   bounds for the Thurston norm},
   journal={Topology},
   volume={44},
   date={2005},
   number={5},
   pages={895--945}
}

\bib{Horn}{article}{
   author={Horn, P. D.},
   title={On computing the first higher-order Alexander modules of knots},
   journal={Exp. Math.},
   volume={23},
   date={2014},
   number={2},
   pages={153--169}
}
\bib{kl}{article}{
   author={Kirk, P.},
   author={Livingston, C.},
   title={Twisted Alexander invariants, Reidemeister torsion, and
   Casson-Gordon invariants},
   journal={Topology},
   volume={38},
   date={1999},
   number={3},
   pages={635--661},
}
\bib{libalex}{article}{
   author={Libgober, A.},
   title={Alexander polynomial of plane algebraic curves and cyclic multiple
   planes},
   journal={Duke Math. J.},
   volume={49},
   date={1982},
   number={4},
   pages={833--851},
}
\bib{lib2}{article}{
   author={Libgober, A.},
   title={On the homotopy type of the complement to plane algebraic curves},
   journal={J. Reine Angew. Math.},
   volume={367},
   date={1986},
   pages={103--114},
}
\bib{lib3}{article}{
   author={Libgober, A.},
   title={The topology of complements to hypersurfaces and nonvanishing of a
   twisted de Rham cohomology},
   book={
      series={AMS/IP Stud. Adv. Math.},
      volume={5},
      publisher={Amer. Math. Soc., Providence, RI},
   },
   date={1997},
   pages={116--130}
}

\bib{Libgober-characteristic}{article}{
    AUTHOR = {Libgober, A.},
    TITLE = {Characteristic varieties of algebraic curves},
    BOOK = {
    series={Applications of algebraic geometry to coding theory, physics
              and computation (Eilat, 2001)},
    PUBLISHER = {Kluwer Acad. Publ., Dordrecht},
    },
    PAGES = {215--254},
    date = {2001},
}

\bib{MaxLeidy}{article}{
   author={Leidy, C.},
   author={Maxim, L.},
   title={Higher-order Alexander invariants of plane algebraic curves},
   journal={Int. Math. Res. Not.},
   date={2006},
   pages={Art. ID 12976, 23},
}

\bib{MaxLeidySurvey}{article}{
   author={Leidy, C.},
   author={Maxim, L.},
   title={Obstructions on fundamental groups of plane curve complements},
   conference={
      title={Real and complex singularities},
   },
   book={
      series={Contemp. Math.},
      volume={459},
      publisher={Amer. Math. Soc., Providence, RI},
   },
   date={2008},
   pages={117--130}
}
\bib{Oka}{article}{
   author={Oka, M.},
   title={A survey on Alexander polynomials of plane curves},
   conference={
      title={Singularit\'{e}s Franco-Japonaises},
   },
   book={
      series={S\'{e}min. Congr.},
      volume={10},
      publisher={Soc. Math. France, Paris},
   },
   date={2005},
   pages={209--232},
}

\bib{OS}{article}{
   author={Oka, M.},
   author={Sakamoto, K.},
   title={Product theorem of the fundamental group of a reducible curve},
   journal={J. Math. Soc. Japan},
   volume={30},
   date={1978},
   number={5},
   pages={599--602}
}

\bib{Serre}{article}{
   author={Serre, Jean-Pierre},
   title={Sur la topologie des vari\'{e}t\'{e}s alg\'{e}briques en caract\'{e}ristique $p$},
   language={French},
   conference={
      title={Symposium internacional de topolog\'{\i}a algebraica International
      symposium on algebraic topology},
   },
   book={
      publisher={Universidad Nacional Aut\'{o}noma de M\'{e}xico and UNESCO, Mexico
   City},
   },
   date={1958},
   pages={24--53},
}
\bib{Suky}{article}{
   author={Su, Y.},
   title={Higher-order Alexander invariants of hypersurface complements},
   journal={arXiv:1510.03467}
}
\bib{suciu}{article}{
   author={Suciu, A. I.},
   title={Fundamental groups, Alexander invariants, and cohomology jumping
   loci},
   conference={
      title={Topology of algebraic varieties and singularities},
   },
   book={
      series={Contemp. Math.},
      volume={538},
      publisher={Amer. Math. Soc., Providence, RI},
   },
   date={2011},
   pages={179--223}
}	
\bib{zariski}{article}{
   author={Zariski, O.},
   title={On the irregularity of cyclic multiple planes},
   journal={Ann. of Math. (2)},
   volume={32},
   date={1931},
   number={3},
   pages={485--511},
}

\end{biblist}
\end{bibdiv}

\end{document}